\pdfoutput=1

\documentclass[11 pt, letterpaper, oneside]{article}
\usepackage[T1]{fontenc}
\usepackage{inputenc,xcolor,amsthm,hyperref, cite, color, url,amsfonts,amsmath,amssymb,caption,multicol,graphicx,fancyhdr,tikz,pgfplots,subcaption,comment,commath,bm}
\usepackage{pdflscape}
\usepgfplotslibrary{groupplots}
\usepackage[toc,page]{appendix}
\usepackage[bottom=3cm, headsep=1cm,headheight=2cm]{geometry}

\usepackage{tikz}
\usetikzlibrary {positioning}
\usetikzlibrary{automata}
\definecolor {processblue}{cmyk}{1,1,1,1}

\tikzset{state/.style ={circle,draw,black,text=black,minimum width =1 cm}}
\tikzset{box/.style   ={rectangle, draw,black,text=black,minimum width =1 cm}}
\tikzstyle{phase} = [fill,shape=circle,minimum size=5pt,inner sep=0pt]



  \usepackage{pgfplots}
  \pgfplotsset{compat=newest}
  \usetikzlibrary{plotmarks}
  \usepackage{grffile}
  \usepackage{amsmath}


\DeclareMathOperator{\E}{\mathbb{E}}
\DeclareMathOperator{\Pb}{\mathbb{P}}

\DeclareMathOperator{\R}{\mathbb{R}}
\DeclareMathOperator{\Var}{\mathbb{V}ar}
\DeclareMathOperator{\Corr}{\mathbb{C}orr}
\DeclareMathOperator{\Cov}{\mathbb{C}ov}
\DeclareMathOperator{\F}{\mathcal{F}}

\newcommand{\D}{\Delta}

\renewcommand{\b}{\beta}
\renewcommand{\d}{\delta}
\renewcommand{\l}{\lambda}
\newcommand{\m}{\mu}

\newcommand{\G}{\Gamma}
\newcommand{\s}{\sigma}

\newcommand{\sq}{\sqrt}
\newcommand{\f}[2]{\frac{#1}{#2}}

\renewcommand{\L}{\Lambda}
\newcommand{\p}{\partial}

\usepackage{blindtext}
\usepackage{soul}

\newtheorem{theorem}{Theorem}[section]
\newtheorem{lemma}[theorem]{Lemma}
\newtheorem{proposition}[theorem]{Proposition}

\theoremstyle{definition}
\newtheorem{definition}[theorem]{Definition}

\newtheorem{remark}[theorem]{Remark}
\newtheorem*{remark*}{Remark}
\newtheorem{example}[theorem]{Example}

\setlength\parindent{0pt}

\newcommand{\pushright}[1]{\ifmeasuring@#1\else\omit\hfill$\displaystyle#1$\fi\ignorespaces}

\newcommand{\Int}{\int\limits}

\usepackage[affil-it]{authblk}
\title{Networks of $\cdot/G/\infty$ Queues\\ with Shot-Noise-Driven Arrival Intensities}
\author[1]{D.T. Koops}
\author[2]{O.J. Boxma}
\author[1]{M.R.H. Mandjes}
\affil[1]{Korteweg-de Vries Institute\\ University of Amsterdam}
\affil[2]{Eurandom and Department of Mathematics and Computer Science\\ Eindhoven University of Technology}
\date{\today}

\newcommand{\vb}{\vspace{3mm}}

\usepackage{pgfplots}
\usepackage{mathtools}
\usepackage{tikz}

\begin{document}
\maketitle

\begin{abstract}
\noindent We study infinite-server queues in which the arrival process is a Cox process (or doubly stochastic Poisson process), of which the arrival rate is given by a shot-noise process. A shot-noise rate emerges naturally in cases where the arrival rate tends to exhibit sudden increases (or: shots) at random epochs, after which the rate is inclined to revert to lower values. Exponential decay of the shot noise is assumed, so that the queueing systems are amenable to analysis. In particular, we perform transient analysis on the number of jobs in the queue jointly with the value of the driving shot-noise process. Additionally, we derive heavy-traffic asymptotics for the number of jobs in the system by using a linear scaling of the shot intensity. First we focus on a one dimensional setting in which there is a single infinite-server queue, which we then extend to a network setting. 
\end{abstract}

\section{Introduction}
\label{sec:introduction}
In the queueing literature one has traditionally studied queues with Poisson input. The Poisson assumption typically facilitates explicit analysis, but it does not always align well with actual data, see e.g.\ \cite{KW2014} and references therein. More specifically, statistical studies show that in many practical situations, Poisson processes underestimate the variability of the queue's input stream.
This observation has motivated research on queues fed by arrival processes that better capture the burstiness observed in practice.

\vb

The extent to which burstiness takes place can be measured by the dispersion index, i.e.\ the ratio of the variance of the number of arrivals in a given interval, and the corresponding expected value. In arrival streams that display burstiness, the dispersion index is larger than unity (as opposed to Poisson processes, for which it is equal to unity), a phenomenon that is usually referred to as {\it overdispersion}. 
It is desirable that the arrival process of the queueing model takes the observed overdispersion into account. One way to achieve this, is to make use of {\it Cox processes}, which are Poisson processes, conditional on the stochastic time-dependent intensity. It is an immediate consequence of the law of total variance, that Cox processes \textit{do} have a dispersion index larger than unity. Therefore, this class of processes makes for a good candidate to model overdispersed input processes.

\vb

In this paper we contribute to the development of queueing models fed by input streams that exhibit overdispersion. We analyze infinite-server queues driven by a particular Cox process, in which the rate is a (stochastic) shot-noise process. 

The shot-noise process we use is one in which there are only upward jumps (or: shots), that arrive according to a homogeneous Poisson process. Furthermore, we employ an exponential `response' or `decay' function, which encodes how quickly the process will decline after a jump. In this case, the shot-noise process is a Markov process, see \cite[p.\ 393]{Ross}. There are several variations on shot-noise processes; see e.g. \cite{IJ2003} for a comprehensive overview.

\vb

It is not a novel idea to use a shot-noise process as stochastic intensity. For instance, in insurance mathematics, the authors of \cite{D2003} use a shot-noise-driven Cox process to model the claim count. They assume that disasters happen according to a Poisson process, and each disaster can induce a cluster of arriving claims. The disaster corresponds to a shot upwards in the claim intensity. As time passes, the claim intensity process decreases, as more and more claims are settled.  

Another example of shot-noise arrival processes is found in the famous paper \cite{O1989}, where it is used to model the occurences of earthquakes.  The arrival process considered in \cite{O1989} has one crucial difference with the one used in this paper: it makes use of Hawkes processes \cite{Hawkes}, which do have a shot-noise structure, but have the special feature that they are {\it self-exciting}. More specifically, in Hawkes processes, an arrival induces a shot in the arrival rate, whereas in our shot-noise-driven Cox model these shots are merely exogenous. The Hawkes process is less tractable than the shot-noise-driven Cox process.  A very recent effort to analyze $\cdot/G/\infty$ queues that are driven by a Hawkes process has been made in \cite{Gao2016}, where a functional central limit theorem is derived for the number of jobs in the system. In this model, obtaining explicit results (in a non-asymptotic setting), as we are able to do in the shot-noise-driven Cox variant, is still an open problem. 

\vb

In order to successfully implement a theoretical model, it is crucial to have methods to estimate its parameters from data. The shot-noise-driven Cox process is attractive since it has this property. Statistical methods that filter the unobservable intensity process, based on Markov Chain Monte Carlo (MCMC) techniques, have been developed; see \cite{Centanni2006} and references therein. By filtering, they refer to the estimation of the intensity process in a given time interval, given a realized arrival process. Subsequently, given this `filtered path' of the intensity process, the parameters of the shot-noise process can be estimated by a Monte Carlo version of the expectation maximization (EM) method.
Furthermore, the shot-noise-driven Cox process can also be easily simulated;  see e.g.\ the thinning procedure described in \cite{sim}.

\vb

In this paper we study networks of infinite-server queues with shot-noise-driven Cox input. We assume that the service times at a given node are i.i.d.\ samples from a general distribution. The output of a queue is routed to a next queue, or leaves the network. Infinite-server queues have the inherent advantage that jobs do not interfere with one another, which considerably simplifies the analysis. Furthermore, infinite-server systems are frequently used to produce approximations for corresponding finite-server systems. In the network setting, we can model queueing systems that are driven by correlated shot-noise arrival processes. With regards to applications, such a system could, e.g., represent the call centers of a fire department and police department in the same town.

\vb

The contributions and organization of this paper are as follows.
In this paper we derive exact and asymptotic results. The main result of the exact analysis is Thm.\ \ref{thm:main}, where we find the joint Laplace transform of the numbers of jobs in the queues of a feedforward network, jointly with the  shot-noise-driven arrival rates. We build up towards this result as follows. In Section \ref{sec:Notation and preliminaries} we introduce notation  and we state the important Lemma \ref{lemma} that we repeatedly rely on. Then we derive exact results for the single infinite-server queue with a shot-noise arrival rate, in Section \ref{sec: Exact analysis}. Subsequently, in Section \ref{sec:asymptotic analysis}, we show that after an appropriate scaling the number of jobs in the system satisfies a functional central limit theorem (Thm.\ \ref{thm:FCLT}); the limiting process is an Ornstein-Uhlenbeck (OU) process driven by a superposition of a Brownian motion and an integrated OU process. We then  extend the theory to a network setting in Section \ref{sec:Networks}. Before we consider full-blown networks, we first consider a tandem system consisting of an arbitrary number of infinite-server queues in Section \ref{sec:tandem}. Then it is argued in Section \ref{sec:parallel} that a feedforward network can be seen as a number of  tandem queues in parallel. We analyze two different ways in which dependency can enter the system through the arrival process. Firstly, in Model (M1), parallel service facilities are driven by a multidimensional shot-noise process in which the \textit{shots} are simultaneous (which includes the possibility that all shot-noise processes are equal). Secondly, in Model (M2), we assume that there is one shot-noise arrival intensity that generates simultaneous \textit{arrivals} in all tandems. In Section \ref{sec:concluding remarks} we finish with some concluding remarks.

\section{Notation and preliminaries}
\label{sec:Notation and preliminaries}
Let $(\Omega, \mathcal{F}, \{\mathcal{F}_t\}_{t\geq0},\Pb)$ be a probability space, in which the filtration $\{\mathcal{F}_t\}_{t\geq0}$ is such that $\L(\cdot)$ is adapted to it. A shot-noise process is a process that has random jumps at Poisson epochs, and a deterministic `response' or `decay' function, which governs the behavior of the process. See \cite[Section 8.7]{Ross} for a brief account of shot-noise processes. The shot noise that we use in this paper has the following representation:
\begin{equation}
\label{eq:SN}
\L(t) = \L(0)e^{-rt} + \sum_{i=1}^{P_B(t)} B_i e^{-r(t-t_i)},
\end{equation}
where the $B_i\geq0$ are i.i.d.\ shots from a general distribution, the decay function is exponential with rate $r>0$, $P_B$ is a homogeneous Poisson process with rate $\nu$, and the epochs of the shots, that arrived before time $t$, are labelled $t_1,t_2,\ldots,t_{P_B(t)}$. 

As explained in the introduction, the shot-noise process serves as a stochastic arrival rate to a queueing system. It is straightforward to simulate a shot-noise process; for an illustration of a sample path, consider Fig.\ \ref{fig:sn}. Using the thinning method for nonhomogeneous Poisson processes  \cite{sim}, and using the sample path of Fig. \ref{fig:sn} as the arrival rate, one can generate a corresponding sample path for the arrival process, as is displayed in Fig.\ \ref{fig:arrivals}. Typically, most arrivals occur shortly after peaks in the shot-noise process in Fig.\ \ref{fig:sn}, as expected.

\begin{figure}[ht!]
\centering
\begin{tikzpicture}
 \begin{axis}[
   ticks=none,
   scaled ticks=false,
   xmin=1,
   xmax=13,
   ymin=0,
   xlabel=$t$,
   ylabel=$\L(t)$,
   width=14cm,
   height=6cm,
   axis y line=left,
   axis x line=bottom
   ]
    \addplot[domain=0:1, black, thick, smooth] {0};
    \addplot[black, thick, dashed, smooth]coordinates{(1,0)(1,3)};
    \addplot[domain=1:3, black, thick,smooth] {3*exp(-(x-1))};
    \addplot[black, thick, dashed, smooth]coordinates{(3,3*0.13533528323)(3,3*0.13533528323+2)};
    \addplot[domain=3:4, black,thick,smooth]{2.40600584969*exp(-(x-3))};
    \addplot[black, thick, dashed, smooth]coordinates{(4,0.88512008743)(4,1.18512008743)};
    \addplot[domain=4:8, black,thick,smooth]{1.18512008743*exp(-(x-4))};
    \addplot[black, thick, dashed, smooth]coordinates{(8,0.02170623156)(8,0.02170623156+1.5)};
    \addplot[domain=8:9.75, black,thick,smooth]{(0.02170623156+1.5)*exp(-(x-8))};
    \addplot[black, thick, dashed, smooth]coordinates{(9.75,0.26443289263)(9.75,0.26443289263+3.5)};
    \addplot[domain=9.75:16, black,thick,smooth]{(0.26443289263+3.5)*exp(-(x-9.75))};
\end{axis}
\end{tikzpicture}
\caption{\textit{Sample path of shot-noise process}}
\label{fig:sn}

\centering
\begin{tikzpicture}

\begin{axis}[
   ticks=none,
   scaled ticks=false,
   xmin=1.35,
   xmax=13,
   ymin=0,
   xlabel=$t$,
   ylabel=number of arrivals,
   width=14cm,
   height=6cm,
   axis y line=left,
   axis x line=bottom
   ]   
   \addplot[thick,black]
    table {%
0 0
1.81 0
1.82 1
1.83 2
1.98 2
1.99 3
2.26 3
2.27 4
2.3 4
2.31 5
2.48 5
2.49 6
3.33 6
3.34 7
3.41 7
3.42 8
3.44 8
3.45 9
3.73 9
3.74 10
4.17 10
4.18 11
4.49 11
4.5 12
4.76 12
4.77 13
5.04 13
5.05 14
6.43 14
6.44 15
8.17 15
8.18 16
8.33 16
8.34 17
8.96 17
8.97 18
10.13 18
10.14 19
10.28 19
10.29 20
10.3 21
10.37 21
10.38 22
10.45 22
10.46 23
10.58 23
10.59 24
10.75 24
10.76 25
10.86 25
10.87 26
10.88 27
10.92 27
10.93 28
11.02 28
11.03 29
11.04 29
11.05 30
11.23 30
11.24 31
11.71 31
11.72 32
12.19 32
12.2 33
12.21 34
12.44 34
12.45 35
12.99 35
};
\end{axis}
\end{tikzpicture}%
\caption{\textit{A realization of arrival process corresponding to the sample path of the arrival rate process in Fig.\ \ref{fig:sn}}}
\label{fig:arrivals}
\end{figure}

\vb

We write $\L$ (i.e., without argument) for a random variable with distribution equal to that of $\lim_{t\to\infty}\L(t)$. We now present  well-known transient and stationary moments of the shot-noise process, see Appendix \ref{app:cov} and e.g.\ \cite{Ross}: with $B$ distributed as $B_1$,
\begin{align}
\label{eq:momentsSN}
\nonumber\E \L(t) = \L(0)e^{-rt} + \f{\nu\E B}{r}(1-e^{-rt})&, \quad\E\L=\f{\nu\E B}{r},\\
\Var \L(t)= \f{\nu\E B^2}{2r}(1-e^{-2rt})&, \quad\Var \L = \f{\nu\E B^2}{2r}, \\
\nonumber\Cov(\L(t),\L(t+\d))= e^{-r\d} \Var \L(t)&.
\end{align}

We remark that, for convenience, we throughout assume $\Lambda(0)=0$. The results can be readily extended to the case in which $\L(0)$ is a non-negative random variable, at the cost of a somewhat more cumbersome notation.

\vb

In the one-dimensional case, we denote 
$
\beta(s) = \E e^{-s B} 
$
, and in the multidimensional case, where $s=(s_1,s_2,\ldots,s_d)$, for some integer $d\geq2$, now denotes a vector, we write
\[
\beta(s) = \E e^{-\sum_i s_i B_i}.
\]
The following lemma will be important for the derivation of the joint transform of $\Lambda(t)$ and the number of jobs in system, in both single and multi-node cases. 
\begin{lemma}
\label{lemma}
Let $\L(\cdot)$ be a shot-noise process. Let $f:\R\times\R^{d}\to\R$ be a function which is piecewise continuous in its first argument, with at most a countable number of discontinuities. Then it holds that
\begin{align*}
&\E \exp\left(\int_0^t f(u,z) \L(u) \dif u - s \L(t)\right)\\
&= \exp\left(\nu \int_0^t\left( \beta\left(s e^{-r(t-v)}-e^{rv}\int_{v}^t f(u,z) e^{-ru}\dif u \right) -1\right)\dif v\right).
\end{align*}
\end{lemma}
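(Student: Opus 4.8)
The plan is to exploit the Poisson structure of the shots directly, through the exponential (Campbell) formula for Poisson point processes. Represent the shots as a marked Poisson random measure $N$ on $[0,t]\times[0,\infty)$ with mean measure $\mu(\dif v,\dif b)=\nu\,\dif v\,\Pb_B(\dif b)$, where $\Pb_B$ denotes the law of $B$; since $\L(0)=0$, one has $\L(u)=\int_{[0,u]\times[0,\infty)}b\,e^{-r(u-v)}\,N(\dif v,\dif b)$ for every $u\le t$, so that the whole expression in the exponent becomes linear in $N$.

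First I would substitute this representation into both terms of the exponent and interchange $\int_0^t(\cdot)\,\dif u$ with the integral against $N$. This Fubini step is legitimate: on the compact interval $[0,t]$ the piecewise-continuous function $f(\cdot,z)$ is bounded (with at most countably many discontinuities) and the integrand $b\,e^{-r(u-v)}$ is non-negative. It yields
\[
\int_0^t f(u,z)\L(u)\,\dif u - s\,\L(t) = \int_{[0,t]\times[0,\infty)} g(v,b)\,N(\dif v,\dif b),
\]
with $g(v,b):=b\left(\int_v^t f(u,z)e^{-r(u-v)}\,\dif u - s\,e^{-r(t-v)}\right)$; the $u$-integral starts at $v$ because the shot at time $v$ only affects $\L(u)$ for $u\ge v$.

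Next I would apply the exponential formula $\E\exp\!\big(\int g\,\dif N\big)=\exp\!\big(\int(e^{g}-1)\,\dif\mu\big)$, obtaining
\[
\E\exp\left(\int_0^t f(u,z)\L(u)\,\dif u - s\,\L(t)\right) = \exp\left(\nu\int_0^t\int_0^\infty\big(e^{g(v,b)}-1\big)\,\Pb_B(\dif b)\,\dif v\right).
\]
The inner $\dif b$-integral of $e^{g(v,b)}$ equals $\E\exp(B\,c_v)$ with $c_v:=\int_v^t f(u,z)e^{-r(u-v)}\,\dif u - s\,e^{-r(t-v)}$, and by the very definition of $\beta$ this is $\beta(-c_v)=\beta\big(s\,e^{-r(t-v)}-e^{rv}\!\int_v^t f(u,z)e^{-ru}\,\dif u\big)$, after pulling $e^{rv}$ out of the integral. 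Inserting this back gives precisely the claimed identity. The multidimensional version is identical once $sB$ and $c_vB$ are read as the relevant inner products — this is exactly the regime for which $\beta(s)=\E e^{-\sum_i s_iB_i}$ was defined. (As an alternative to Campbell's formula one may condition on $\{P_B(t)=n\}$: the $n$ shot epochs are then distributed as the order statistics of $n$ i.i.d.\ $\mathrm{Uniform}[0,t]$ variables, independent of the $B_i$, so the conditional expectation factorizes as $\big(t^{-1}\int_0^t\beta(\cdot)\,\dif v\big)^n$, and summing against the $\mathrm{Poisson}(\nu t)$ weights reproduces the formula.)

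The one place I would be careful is finiteness: if $f(\cdot,z)$ is large and positive then $c_v>0$ is possible and $\E e^{Bc_v}=\beta(-c_v)$ may be infinite, so in general the identity should be read in $[0,\infty]$. In all applications of this lemma in the present paper, however, $f(\cdot,z)$ is non-positive, whence $c_v\le 0$, every $\beta(\cdot)$ appearing above has a non-negative argument, and the interchange of summation and expectation (equivalently, the validity of Campbell's formula with a finite right-hand side) follows at once by monotone convergence. That sign bookkeeping, rather than any deep analytic difficulty, is the main thing to get right.
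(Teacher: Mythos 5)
Your proof is correct, but it takes a genuinely different route from the paper's. The paper proves the lemma from first principles: it discretizes time into intervals of length $\Delta$, writes $\Lambda(k\Delta)$ in terms of the shots arriving in each subinterval (with uniform offsets $U_i$), interchanges the order of summation to isolate the contribution of each shot, conditions on the Poisson increments of each subinterval, and lets $\Delta\downarrow0$ --- in effect re-deriving the exponential formula for this particular functional by hand. You instead take the exponential (Campbell) formula for marked Poisson random measures as given, reduce the exponent to a single linear functional $\int g\,\mathrm{d}N$ of the random measure via a Fubini interchange (which is even simpler than you argue: on $[0,t]$ the measure $N$ has finitely many atoms almost surely, so the interchange is just a rearrangement of a finite sum, pathwise), and then read off the result; your parenthetical alternative --- conditioning on $P_B(t)=n$ and using the uniform order-statistics property of the epochs --- is essentially a one-block version of the paper's discretization. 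What your route buys is brevity and a clear view of where the formula comes from, at the price of invoking a standard but nontrivial external result; the paper's argument is longer but self-contained and elementary. Your closing remark on finiteness is a genuine point the paper does not discuss: since the intensity measure on $[0,t]$ is finite, the identity holds in $[0,\infty]$ with no cancellation issues, and in every application in the paper one has $f\le 0$ and $s\ge 0$, so all quantities are finite; your sign bookkeeping there is correct.
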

\begin{proof}
See appendix \ref{app:proof}.
\end{proof}

\section{A single infinite-server queue}
In this section we study the $M_{\rm S}/G/\infty$ queue. This is a single infinite-server queue, of which the arrival process is a Cox process driven by the shot-noise process $\L(\cdot)$, as defined in Section \ref{sec:Notation and preliminaries}. First we derive exact results in Section \ref{sec: Exact analysis}, where we find the joint transform of the number of jobs in the system and the shot-noise rate, and derive expressions for the expected value and variance. Subsequently, in Section \ref{sec:asymptotic analysis}, we derive a functional central limit theorem for this model.

\label{sec:A single infinite-server queue}
\subsection{Exact analysis}
\label{sec: Exact analysis}
We let $J_i$ be the service requirement of the $i$-th job, where  $J_1,J_2,\ldots$ are assumed to be i.i.d.; in the sequel $J$ denotes a random variable that is equal in distribution to $J_1$. Our first objective is to find the distribution of the number of jobs in the system at time $t$, in the sequel denoted by $N(t)$. This can be found in several ways; because of the appealing underlying intuition, we here provide an argument in which we approximate the arrival rate on intervals of length $\Delta$ by a constant, and then let $\Delta\downarrow 0$.

This procedure works as follows. 
We let $\L(t)=\Lambda(\omega,t)$ be an arbitrary sample path of the driving shot-noise process. Given $\Lambda(t)$, the number of jobs that arrived in the interval $[k\Delta,(k+1)\Delta)$ and are still in the system at time $t$, has a Poisson distribution with parameter $\Pb(J>t-(k\Delta+\Delta U_k))\cdot \Delta\Lambda(k\Delta) + o(\Delta)$, where $U_1, U_2,\ldots$ are i.i.d.\ standard uniform random variables. Summing over $k$ yields that the number of jobs in the system at time $t$ has a Poisson distribution with parameter 
\[
\sum_{k=0}^{t/\Delta-1} \Pb(J>t-(k\Delta+\Delta U_k)) \Delta\Lambda(k\Delta) + o(\Delta),
\]
which converges, as $\D\downarrow0$, to
\begin{equation}
\label{eq:randompar}
\int_0^t \Pb(J>t-u) \L(u) \dif u.
\end{equation}
The argument above is not new: a similar observation was mentioned in e.g.\ \cite{EMW1993}, for deterministic rate functions. Since $\L(\cdot)$ is actually a stochastic process, we conclude that the number of jobs has a mixed Poisson distribution, with the expression in Eqn.\ \eqref{eq:randompar} as random parameter. 
As a consequence, we find by conditioning on $\mathcal{F}_t$, 
\begin{eqnarray}
\label{trans1}
\nonumber\xi(t,z,s)&:=&\E z^{N(t)} e^{-s\Lambda(t)} = \E\left(e^{-s\L(t)}\E\left(z^{N(t)}\,|\,\mathcal{F}_t\right)\right)\\
&=& \E\exp\left( \int_0^t (z-1) {\mathbb P}(J>t-u)\Lambda(u) \dif u - s\Lambda(t) \right).
\end{eqnarray}
We have found the following result.
\begin{theorem}\label{THM1}
Let $\L(\cdot)$ be a shot-noise process. Then
\begin{equation}
\label{eq:thm}
\log \xi(t,z,s)= \nu \int_0^t\left( \beta\left((1-z) e^{rv}
\int_{v}^t {\mathbb P}(J>t-u)e^{-ru}\dif u + s e^{-r(t-v)} \right) -1\right)\dif v.
\end{equation}
\end{theorem}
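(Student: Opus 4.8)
The plan is to obtain Theorem~\ref{THM1} as a direct application of Lemma~\ref{lemma} to the representation of $\xi(t,z,s)$ already established in \eqref{trans1}. Recall that the mixed-Poisson argument preceding the theorem gives
\[
\xi(t,z,s) = \E\exp\!\left( \int_0^t (z-1)\,\Pb(J>t-u)\,\L(u)\,\dif u - s\,\L(t) \right),
\]
which is precisely of the form $\E\exp\!\big(\int_0^t f(u,z)\,\L(u)\,\dif u - s\,\L(t)\big)$ with the choice $f(u,z) := (z-1)\,\Pb(J>t-u)$ (here $t$ is fixed and $z$ plays the role of the parameter vector).

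Before invoking Lemma~\ref{lemma} I would verify its hypothesis on $f$: the map $u\mapsto \Pb(J>t-u)$ is, up to the reflection $u\mapsto t-u$, the complementary distribution function of $J$, hence monotone and bounded on $[0,t]$, so it has at most countably many discontinuities and is piecewise continuous; multiplying by the constant $z-1$ preserves these properties. Thus $f(\cdot,z)$ satisfies the assumptions of the lemma.

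Applying Lemma~\ref{lemma} and simplifying the inner argument of $\beta$,
\[
s\,e^{-r(t-v)} - e^{rv}\!\int_v^t (z-1)\,\Pb(J>t-u)\,e^{-ru}\,\dif u \;=\; s\,e^{-r(t-v)} + (1-z)\,e^{rv}\!\int_v^t \Pb(J>t-u)\,e^{-ru}\,\dif u,
\]
so that
\[
\xi(t,z,s) = \exp\!\left( \nu \int_0^t\!\left( \beta\!\left( (1-z)\,e^{rv}\!\int_v^t \Pb(J>t-u)\,e^{-ru}\,\dif u + s\,e^{-r(t-v)} \right) - 1 \right)\dif v \right),
\]
and taking logarithms yields \eqref{eq:thm}. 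There is no genuine obstacle in this argument: all the substantive work is carried by Lemma~\ref{lemma} and by the derivation of \eqref{trans1}; the only points worth making explicit are the regularity check on $f$ and the sign bookkeeping when the factor $(z-1)$ is pulled out of the inner integral.
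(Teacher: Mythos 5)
Your proof is correct and follows the same route as the paper: the paper also obtains Theorem \ref{THM1} by applying Lemma \ref{lemma} directly to the representation in Eqn.\ \eqref{trans1} with $f(u,z)=(z-1)\Pb(J>t-u)$. Your extra remarks on the regularity of $u\mapsto\Pb(J>t-u)$ and on the sign bookkeeping are fine but not essential additions.
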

\begin{proof}
The result follows directly from Lemma \ref{lemma} and Eqn.\ \eqref{trans1}.
\end{proof}

In Thm.\ \ref{THM1} we found that $N(t)$ has a Poisson distribution with the random parameter given in Eqn.\ \eqref{eq:randompar}. This leads to the following expression for the expected value 
\begin{equation}
\label{eq:EN}
\E N(t) = \int_0^t \E \Lambda(u) \Pb(J>t-u) \dif u.
\end{equation}
In addition, by the law of total variance we find
\begin{equation}\label{eq:VN}
\Var N(t) = \Var \left(\int_0^t \L(u) \Pb(J>t-u) \dif u\right) + \E \left(\int_0^t \L(u) \Pb(J>t-u) \dif u\right).
\end{equation}
The latter expression we can further evaluate, using an approximation argument that resembles the one we used above. 
Using a Riemann sum approximation, we find
\begin{eqnarray*}
\lefteqn{\Var \left(\int_0^t \Lambda(u) \Pb(J>t-u) \dif u\right) = \lim_{\D\downarrow0} \Var \left(\sum_{i=0}^{t/\D-1} \L(i \D) \Pb(J> t-i \D) \D\right)}\\
&=&2 \lim_{\D\downarrow0} \sum_{i=0}^{t/\D-1} \sum_{j>i}^{t/\D-1} \Cov(\L(i \D) \Pb(J>t-i \D) \D, \L(j \D) \Pb(J>t-j \D) \D)\\
&=& 2\int_0^t \int_v^t \Cov(\L(u),\L(v)) \Pb(J>t-u) \Pb(J>t-v) \dif u \dif v.
\end{eqnarray*}

Assuming that $u\geq v$, we know that $\Cov(\L(u),\L(v)) = e^{-r(u-v)} \Var \L(v)$ (cf. Lemma \ref{cov}). 
We thus have that (\ref{eq:VN}) equals
\[2\int_0^t \int_v^t  e^{-r(u-v)} \Var \L(v) \Pb(J>t-u) \Pb(J>t-v) \dif u \dif v +  \int_0^t  \E\L(u) \Pb(J>t-u) \dif u.
\]
We can make this more explicit using the corresponding formulas in \eqref{eq:momentsSN}.

\begin{example}[Exponential case]\label{Ex32}
Consider the case in which $J$ is exponentially distributed with mean $1/\mu$ and $\L(0)=0$. Then we can calculate the mean and variance explicitly. For $\mu\neq r$,
\[
\E N(t) = \f{\E \Lambda}{\mu} h_{r,\mu}(t),
\]
where the function $h_{r,\mu}(\cdot)$ is defined by
\[
t\mapsto \left\{ 
\begin{array}{ll} 
{\displaystyle \f{\mu(1-e^{-rt})-r(1-e^{-\mu t})}{\mu - r} }	& \text{if } \mu\neq r,\\
1-e^{-rt}-rte^{-rt} 				& \text{if } \mu = r.
\end{array}
\right.
\]
For the variance, we thus find for $\mu\neq r$
\[
\Var N(t) = \f{\nu\E B^2}{2r}\f{r^2(1-e^{-2\m t})+\m^2(1-e^{-2rt}) + \m r(4e^{-t(\m+r)}-e^{-2\m t} - e^{-2rt} -2)}{\m (\m-r)^2 (\m+r)}+\E N(t),
\]
and for $\mu=r$ 
\[
\Var N(t) = \f{\nu\E B^2}{4 r^3}{\left(1-e^{-2rt}-2rt(1+rt)e^{-2rt}\right)}{} + \E N(t).
\]
\end{example}

\subsection{Asymptotic analysis}
\label{sec:asymptotic analysis}

This subsection focuses on deriving a functional central limit theorem (FCLT) for the model under study, after having appropriately scaled the shot rate of the shot-noise process. 
In the following we assume that the service requirements are exponentially distributed with rate $\mu$, and we point out how it can be generalized to a general distribution in Remark \ref{rem:kiefer} below. We follow the standard approach to derive the FCLT for infinite-server queueing systems; we mimic the argumentation used in e.g.\
\cite{PW2007,Anderson2016}. As the proof has a relatively large number of standard elements, we restrict ourselves to the most important steps.

\vb

We apply a linear scaling to the shot rate of the shot-noise process, i.e.\ $\nu\mapsto n\nu$. It is readily checked that under this scaling, the steady-state level of the shot-noise process, as well as the steady-state number of jobs in the queue, blow up by a factor $n$. It is our objective to prove that, after appropriate centering and normalization, the process recording the number of jobs in the system converges to a Gaussian process. 

In the $n$-th scaled model, the number of jobs in the system at time $t$, denoted by $N_n(t)$, has the following (obvious) representation: with $A_n(t)$ denoting the number of arrivals in $[0,t]$, and $D_n(t)$ the number of departures,
\begin{equation}
\label{eq:FCLTrep}
N_n(t) = N_n(0) + A_n(t) - D_n(t).
\end{equation}
Here, $A_n(t)$ corresponds to a Cox process with a shot-noise-driven rate,  and therefore we have, with $\Lambda_n(s)$ the shot-noise in the scaled model at time $s$ and $S_A(\cdot)$ a unit-rate Poisson process,
\[A_n(t) = S_A\left(\int_0^t \Lambda_n(u){\rm d}u\right);\]
in line with our previous assumptions, we put $\L_n(0)=0.$
For our infinite-server model the departures $D_n(t)$ can be written as, 
with $S_D(\cdot)$  a unit-rate Poisson process (independent of $S_A(\cdot)$),
\[D_n(t)=S_D\left(\int_0^t \mu N_n(u)\dif u\right).\]

We start by identifying the average behavior of the process $N_n(t)$. 
Following the reasoning of \cite{Anderson2016}, assuming that $N_n(0)/n \Rightarrow \rho(0)$ (where `$\Rightarrow$' denotes weak convergence),
$N_n(t)/n$ converges almost surely to the solution of
\begin{equation}\label{RHO}
\rho(t) = \rho(0) + \int_0^t \E\L(u)\dif u - \int_0^t \m \rho(u)\dif u.
\end{equation}
This equation is solved by $\rho(t)= \E N(t)$, with $\E N(t)$ provided in Example \ref{Ex32}.

Now we move to the derivation of the FCLT. Following the approach used in \cite{Anderson2016}, we proceed by studying an FCLT for the input rate process. 
To this end, we first define
\[\hat{\L}_n(t) := \sq n \left(\f1n\L_n(t)-\E\L(t)\right);
\qquad \hat{K}_n(t) := \int_0^t \hat\L_n(u)\dif u.\]

The following lemma states that $\hat K_n(\cdot)$ converges to an integrated Ornstein-Uhlenbeck (OU) process, corresponding to an OU process $\hat{\L}(\cdot)$ with a speed of mean reversion equal to $r$, long-run equilibrium level 0, and variance $\s_\L^2:=\nu\E B^2/(2r)$. 

\begin{lemma}
\label{lem:IOU}
Assume that for the shot sizes, distributed as $B$, it holds that $\E B,\E B^2<\infty$. Then $\hat{K}_n(\cdot)\Rightarrow \hat{K}(\cdot)$ as $n\to\infty$, where
\begin{equation}
\label{eq:barL}
\hat{K}(t) = \int_0^t \hat{\L}(u) \dif u,
\end{equation}
in which $\hat{\L}$ satisfies, with $W_1(\cdot)$  a standard Brownian motion,
\begin{equation}
\label{eq:barhat}
\hat{\L}(t) = \s_\Lambda W_1(t)- r \int_0^t \hat{\L}(u)\dif u.
\end{equation}
\end{lemma}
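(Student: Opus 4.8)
The plan is to exhibit $\hat\L_n(\cdot)$ as a continuous path functional of a centred, Donsker-scaled compound Poisson process, invoke the standard functional CLT for the latter, and transfer the convergence through the continuous mapping theorem — once to obtain $\hat\L_n\Rightarrow\hat\L$ and, composing with the integration map, once more to obtain $\hat K_n\Rightarrow\hat K$. First I would make the driver explicit. Let $J_n$ be the compound Poisson process with jump rate $n\nu$ (the scaled shot intensity) and jump sizes distributed as $B$ that drives $\L_n$, and write $J(t):=\sum_{i=1}^{P_B(t)}B_i$ for its unscaled counterpart; by the scaling property of the Poisson process, $J_n(\cdot)\eqd J(n\,\cdot)$. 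With $\L_n(0)=0$, the shot-noise representation \eqref{eq:SN} reads $\L_n(t)=\int_0^t e^{-r(t-s)}\dif J_n(s)$, and since $\E\L(t)=\nu\E B\int_0^t e^{-r(t-s)}\dif s$ by \eqref{eq:momentsSN}, subtracting $n\,\E\L(t)$ just centres the driver:
\[
\hat\L_n(t)=\int_0^t e^{-r(t-s)}\dif\hat J_n(s),\qquad\text{with}\quad\hat J_n(s):=\sqrt{n}\Big(\tfrac1n J_n(s)-\nu\E B\,s\Big).
\]
An integration by parts removes the integrator, $\hat\L_n(t)=\hat J_n(t)-r\int_0^t e^{-r(t-s)}\hat J_n(s)\dif s$, so that $\hat K_n=\int_0^\cdot\hat\L_n(u)\dif u$ arises from $\hat J_n$ by composing the linear operators $x\mapsto\big(t\mapsto\int_0^t e^{-r(t-s)}x(s)\dif s\big)$ and $x\mapsto\big(t\mapsto\int_0^t x(u)\dif u\big)$ together with addition; I would record that each of these carries $D[0,\infty)$ continuously into $C[0,\infty)$ (continuity at limit paths that are themselves continuous is all the continuous mapping theorem requires here).

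Next I would establish the FCLT for $\hat J_n$. From $J_n(\cdot)\eqd J(n\,\cdot)$ we get $\hat J_n(t)=n^{-1/2}\big(J(nt)-\E J(nt)\big)$, which is precisely the Donsker normalization of the finite-variance Lévy process $J$, whose variance rate equals $\nu\E B^2<\infty$ under the assumption $\E B^2<\infty$. The FCLT for processes with stationary independent increments — equivalently, the martingale FCLT applied to the compensated process $M_n(t):=J_n(t)-n\nu\E B\,t$, whose predictable quadratic variation is $n\nu\E B^2\,t$ and whose rescaled jumps obey the Lindeberg condition because $\nu t\,\E[B^2\mathbf 1\{B>\e\sqrt{n}\}]\to0$ for every $\e>0$ — then yields $\hat J_n\Rightarrow\sqrt{\nu\E B^2}\,W_1$ in $D[0,\infty)$, with $W_1$ a standard Brownian motion. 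Here $\E B<\infty$ is used only for the centring and $\E B^2<\infty$ only for the Gaussian limit, exactly as hypothesised.

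Finally I would apply the continuous mapping theorem to the maps of the first step: from $\hat J_n\Rightarrow\sqrt{\nu\E B^2}\,W_1$ it follows that $\hat\L_n\Rightarrow\hat\L$ and $\hat K_n\Rightarrow\hat K$, with $\hat\L(t)=\int_0^t e^{-r(t-s)}\dif\big(\sqrt{\nu\E B^2}\,W_1(s)\big)$ and $\hat K(t)=\int_0^t\hat\L(u)\dif u$. To confirm that $\hat\L$ is the OU process of \eqref{eq:barhat}, put $g(t):=\int_0^t e^{-r(t-s)}W_1(s)\dif s$, so $g'(t)=W_1(t)-rg(t)$ and hence $g(t)=\int_0^t W_1(s)\dif s-r\int_0^t g(s)\dif s$; multiplying by $\sqrt{\nu\E B^2}$ and rearranging gives $\int_0^t\hat\L(s)\dif s=\sqrt{\nu\E B^2}\,g(t)$, whence $\hat\L(t)=\sqrt{\nu\E B^2}\,W_1(t)-r\int_0^t\hat\L(s)\dif s$, which is \eqref{eq:barhat}; and a direct computation gives $\Var\hat\L(t)=\s_\L^2(1-e^{-2rt})$, identifying $\hat\L$ as the mean-zero OU process with reversion rate $r$ and stationary variance $\s_\L^2=\nu\E B^2/(2r)$.

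No deep obstacle is expected. The two substantive ingredients — the FCLT for the compound Poisson driver and the continuity of the integral operators — are both standard, so the only point needing genuine care is the topological bookkeeping: checking that the convolution and integration maps are continuous at continuous limit paths (so the continuous mapping theorem applies), and noting that $\E B^2<\infty$ is exactly the vanishing-jump condition required by the martingale FCLT. The content of the lemma is really in setting up the $\nu\mapsto n\nu$ scaling so that everything passes through one FCLT plus the continuous mapping theorem.
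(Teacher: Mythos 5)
Your argument is correct and essentially complete, but it takes a different route from the paper. The paper's proof is a one-line reduction: it cites an external result (Prop.~3 of the reference indicated in the proof, with the parameter $\l_d$ there set to zero) to get $\hat\L_n(\cdot)\Rightarrow\hat\L(\cdot)$, and then applies the continuous mapping theorem with the integration map to conclude $\hat K_n(\cdot)\Rightarrow\hat K(\cdot)$. You instead prove the underlying FCLT from scratch: you write $\hat\L_n$ as a convolution functional of the centred, Donsker-scaled compound Poisson driver $\hat J_n$, remove the integrator by integration by parts, establish $\hat J_n\Rightarrow\sqrt{\nu\E B^2}\,W_1$ via the martingale FCLT (with the Lindeberg condition supplied by $\E B^2<\infty$), and then push the convergence through the convolution and integration maps by the continuous mapping theorem. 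What your route buys is self-containedness and transparency about where $\E B<\infty$ and $\E B^2<\infty$ enter, plus an explicit identification of the limit as the OU process; what the paper's route buys is brevity by outsourcing $\hat\L_n\Rightarrow\hat\L$. The step of passing from $\hat\L_n\Rightarrow\hat\L$ to $\hat K_n\Rightarrow\hat K$ is the same in both. One caveat: your limit has diffusion coefficient $\sqrt{\nu\E B^2}$, and you assert this ``is \eqref{eq:barhat}''; literally, \eqref{eq:barhat} has coefficient $\s_\Lambda=\sqrt{\nu\E B^2/(2r)}$, which differs by a factor $\sqrt{2r}$. Your constant is the one consistent with the prelimit variance $\Var\hat\L_n(t)=\tfrac{\nu\E B^2}{2r}(1-e^{-2rt})$ and with the announced stationary variance $\s_\L^2$, so the substance of your derivation is right, but you should flag the mismatch with the displayed equation rather than equate the two silently.
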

\begin{proof}
This proof is standard; for instance from \cite[Prop. 3]{Britt}, by putting the $\l_d$ in that paper to zero, it follows that $\hat\L_n(\cdot) \Rightarrow \hat\L(\cdot)$. This implies   $\hat{K}_n(\cdot)\Rightarrow \hat{K}(\cdot)$, using  \eqref{eq:barL} together with the continuous mapping theorem.
\end{proof}

Interestingly, the above result entails that the arrival rate process displays mean-reverting behavior. This also holds for the job count process in standard infinite-server queues. In other words, the job count process in the queueing system we are studying, can be considered as the composition of two mean-reverting processes. We make this more precise in the following.

\vb

From now on we consider the following centered and normalized version of the number of jobs in the system:
\[\hat N_n(t) := \sq n \left(\f1n N_n(t) - \rho(t)\right).\]
We assume that $\hat N_n(0)\Rightarrow \hat N(0)$ as $n\to\infty$.
To prove the FCLT, we rewrite $\hat N_n(t)$ in a convenient form. Mimicking the steps performed in  \cite{Anderson2016} or \cite{PW2007}, with $\bar S_A(t):=S_A(t)-t$, $\bar S_D(t):=S_D(t)-t$,
\[R_n(t):= \bar S_A\left(\int_0^t \Lambda_n(u){\rm d}u\right) - \bar S_D\left(\mu\int_0^t N_n(u){\rm d}u\right),\]
and using the relation (\ref{RHO}), we eventually obtain
\[\hat N_n(t)=\hat N_n(0) +\frac{R_n(t)}{\sqrt{n}} +\hat K_n(t)-\mu\int_0^t \hat N_n(u){\rm d}u.\]
Our next goal is to apply the martingale FCLT to the martingales $R_n(t)/\sqrt{n}$; see for background on the martingale FCLT for instance \cite{Ethier} and \cite{whitt2007}. The quadratic variation
equals
\[\left[\frac{R_n}{\sqrt{n}}\right]_t = \frac{1}{n}\left(S_A\left(\int_0^t \Lambda_n(u){\rm d}u\right) + S_D\left(\mu\int_0^t N_n(u){\rm d}u\right)\right),\]
which converges to $\int_0^t \E\L(u){\rm d}u+\mu\int_0^t\rho(u){\rm d}u.$
Appealing to the martingale FCLT, the following FCLT is obtained.  
\begin{theorem}
\label{thm:FCLT}
The centered and normalized version of the number of jobs in the queue satisfies an FCLT:  $\hat N_n(\cdot)\Rightarrow\hat N(\cdot)$ as $n\to\infty$, where $\hat N(t)$  solves the stochastic integral equation
\[\hat N(t) =\hat N(0) +\int_0^t \sqrt{\E\L(u)+\mu \rho(u)} \,{\rm d}W_2(u) + \hat K(t) 
-\mu\int_0^t \hat N(u){\rm d}u,\]
with $W_2(\cdot)$ a standard Brownian motion that is independent of the Brownian motion $W_1(\cdot)$ we introduced in the definition of $\hat K(\cdot)$.
\end{theorem}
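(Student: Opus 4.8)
The plan is to follow the by-now-standard martingale route to infinite-server heavy-traffic limits, in the spirit of \cite{PW2007,Anderson2016}. Since the excerpt already establishes the pre-limit identity
\[\hat N_n(t)=\hat N_n(0) +\frac{R_n(t)}{\sqrt{n}} +\hat K_n(t)-\mu\int_0^t \hat N_n(u)\dif u,\]
the proof reduces to two tasks: (i) proving joint weak convergence of the driving triple $\big(\hat N_n(0),R_n/\sqrt{n},\hat K_n\big)$ to $\big(\hat N(0),M,\hat K\big)$ in the Skorokhod space, where $M(t)=\int_0^t\sq{\E\L(u)+\m\rho(u)}\,\dif W_2(u)$ and $W_2$ is independent of the $W_1$ appearing in $\hat K$; and (ii) pushing this convergence through the continuous solution map of the linear integral equation above.

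For (i) I would first record that $R_n/\sqrt{n}$ is a local martingale with respect to the natural filtration: it is a difference of centered unit-rate Poisson processes, time-changed by the continuous nondecreasing processes $\int_0^\cdot\L_n$ and $\m\int_0^\cdot N_n$. Its jumps are of size $n^{-1/2}\to0$, so the negligible-jumps hypothesis of the martingale FCLT (\cite{Ethier,whitt2007}) is immediate, while its optional quadratic variation is precisely the quantity $n^{-1}\big(S_A(\int_0^t\L_n)+S_D(\m\int_0^tN_n)\big)$ already exhibited in the excerpt. A functional law of large numbers then does the rest: under the scaling $\nu\mapsto n\nu$ one has $\tfrac1n\int_0^t\L_n(u)\dif u\to\int_0^t\E\L(u)\dif u$, and combined with $\tfrac1nN_n\to\rho$ (the a.s.\ convergence stated around \eqref{RHO}) this gives $\tfrac\m n\int_0^tN_n(u)\dif u\to\m\int_0^t\rho(u)\dif u$, so $[R_n/\sqrt{n}]_t$ converges to the deterministic, continuous, nondecreasing function $\int_0^t\big(\E\L(u)+\m\rho(u)\big)\dif u$; the one-dimensional martingale FCLT yields $R_n/\sqrt{n}\Rightarrow M$.

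The delicate point — and what I expect to be the main obstacle — is upgrading this to the \emph{joint} statement with $\hat K_n$ and, especially, justifying the independence of the two limiting Brownian motions. The clean argument is a conditioning one: $R_n$ is built solely from the thinning Poisson processes $S_A,S_D$, which are independent of the shot-noise path $\L_n$ that determines $\hat K_n$; conditionally on the entire path of $\L_n$ (hence on $\hat K_n$), $R_n/\sqrt{n}$ remains a martingale satisfying the same FCLT hypotheses with the same — now conditionally deterministic, by the FLLN — quadratic-variation limit, so its conditional limit is a time-changed Brownian motion independent of the conditioning. Un-conditioning gives $\big(\hat N_n(0),R_n/\sqrt{n},\hat K_n\big)\Rightarrow\big(\hat N(0),M,\hat K\big)$ with $W_1$ independent of $W_2$. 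Alternatively, one may apply the multivariate martingale FCLT to the pair consisting of $R_n/\sqrt{n}$ and $\sq{n}\,(\tfrac1n\L_n-\E\L)$ (the latter rewritten as a stochastic integral against the compensated shot process), and simply check that the predictable quadratic covariation of the two coordinates vanishes — which simultaneously reproves Lemma \ref{lem:IOU}.

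Finally, for (ii), consider the map $\Phi$ sending $(x_0,m,k)$ to the unique solution $y$ of $y(t)=x_0+m(t)+k(t)-\m\int_0^ty(u)\dif u$, namely
\[y(t)=e^{-\m t}x_0+\big(m(t)+k(t)\big)-\m\int_0^t e^{-\m(t-u)}\big(m(u)+k(u)\big)\dif u,\]
which one verifies by differentiating. This map is continuous on $\R\times D\times D$ (with the $J_1$ topology) and sends continuous inputs to continuous outputs. Since the limit triple is almost surely continuous, the continuous mapping theorem gives $\hat N_n=\Phi\big(\hat N_n(0),R_n/\sqrt{n},\hat K_n\big)\Rightarrow\Phi\big(\hat N(0),M,\hat K\big)=:\hat N$, and inserting $\hat K(t)=\int_0^t\hat\L(u)\dif u$ from Lemma \ref{lem:IOU} shows that $\hat N$ satisfies precisely the stated stochastic integral equation. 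The extension to general service times (Remark \ref{rem:kiefer}) would run along the same lines, with $N_n$ replaced by a suitable two-parameter residual-service process and the scalar martingale FCLT by its infinite-dimensional counterpart.
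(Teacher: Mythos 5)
Your proposal follows essentially the same route as the paper's own (deliberately terse) proof: the pre-limit identity, the martingale FCLT for $R_n/\sqrt{n}$ with the quadratic variation converging to $\int_0^t(\E\L(u)+\mu\rho(u))\dif u$, Lemma \ref{lem:IOU} for $\hat K_n$, and the continuous-mapping step through the linear integral equation. The only difference is that you spell out the points the paper leaves implicit (joint convergence of the driving triple, independence of $W_1$ and $W_2$ via conditioning on the shot-noise path, and continuity of the solution map), which is a correct and welcome elaboration rather than a different argument.
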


\begin{remark}
\label{rem:arrival}
In passing, we have proven that the arrival process as such obeys an FCLT. With
\[\hat{A}_n(t) 	:= \sq n \left(\f1n A_n(t)-\int_0^t \E\L(u)\dif u\right),\]
we find that  $\hat{A}_n(t)\Rightarrow \hat A(t)$ as $n\to\infty$, where
\[\hat A(t) := \int_0^t \sqrt{\E\L(u)+\mu \rho(u)} \,{\rm d}W_2(u) + \hat K(t)=\int_0^t \sqrt{2\mu \rho(u)+\rho'(u)} \,{\rm d}W_2(u) + \hat K(t);\]
the last equality follows from the fact that $\rho(\cdot)$ satisfies (\ref{RHO}).
\end{remark}

\begin{remark}
\label{rem:kiefer}
The FCLT can be extended to non-exponential service requirements, by making use of  \cite[Thm.\ 3.2]{PW2010}. Their approach relies on two assumptions:
\begin{itemize}
\item[$\circ$] The arrival process should satisfy an FCLT;
\item[$\circ$] The service times are i.i.d.\ non-negative random variables with a general c.d.f.\, independent of the arrival process.
\end{itemize}
As noted in Remark \ref{rem:arrival}, the first assumption is satisfied for the model in this paper. The second assumption holds as well. In the non-exponential case the results are less clean; in general, the limiting process can be expressed in terms of a Kiefer process, cf.\ e.g.\ \cite{strongapproximations}.
\end{remark}

\section{Networks}
\label{sec:Networks}
Now that the reader is familiar with the one-dimensional setting, we extend this to networks. In this section, we focus on feedforward networks in which each node corresponds to an infinite-server queue. Feedforward networks are defined as follows.

\begin{definition}[feedforward network]
\label{def:ff}
Let $G=(V,E)$ be a directed graph with nodes $V$ and edges $E$. The nodes represent infinite-server queues and the directed edges between the facilities demonstrate how jobs move through the system. We suppose that there are no cycles in $G$, i.e. there is no sequence of nodes, starting and ending at the same node, with each two consecutive nodes adjacent to each other in the graph, consistent with the orientation of the edges.
\end{definition}

We focus on feedforward networks to keep the notation manageable. In Thm.\ \ref{thm:main}, we derive the transform of the numbers of jobs in all nodes, jointly with the shot-noise process(es) for feedforward networks. Nonetheless, we provide Example \ref{example: network with loops}, to show that analysis is in fact possible if there is a loop, but at the expense of more involved calculations. 

\vb

Since all nodes represent infinite-server queues, one can see that whenever a node has multiple input streams, it is equivalent to multiple infinite-server queues that work independently from each other, but have the same service speed and induce the same service requirement for arriving jobs. Consider Fig.\ \ref{fig:graaf} for an illustration. The reason why this holds is that different job streams move independently through the system, without creating waiting times for others. Therefore, merging streams do not increase the complexity of our network. The same holds for `splits' in job streams. By this we mean that after jobs finished their service in a server, they move to server $i$ with probability $q_i$ (with $\sum_i q_i=1$). Then, one can simply sample the entire path that the job will take through the system, at the arrival instance at its first server. 

\begin{figure}
\centering
\begin{tikzpicture}[>=latex ,auto ,node distance =4 cm and 5cm ,on grid ,
semithick ,
state/.style ={ circle , color =white ,
draw,processblue , text=black , minimum width =1 cm}]
\node[state] (A)
{$1$};
\node[state] (B) [right =2 of A] {$3$};
\node[state] (C) [below =2 of A] {$2$};
\node[state] (D) [below =2 of B] {$3'$};
\node[state] (E) [below left=1 and 5.2 of A]{$3$};
\node[state] (F) [below left=2 of E]{$2$};
\node[state] (G) [above left=2 of E]{$1$};
\path[->] (A) edge (B);
\path[->] (C) edge (D);
\path[->] (F) edge (E);
\path[->] (G) edge (E);
\coordinate[left =1.5 of G](c);
\coordinate[left =1.5 of F](d);
\coordinate[right=1.5 of E](e);
\draw[->] (c) to [left] node[auto] {} (G);
\draw[->] (d) to [left] node[auto] {} (F);
\draw[->] (E) to [right] node[auto] {} (e);
\coordinate[right =1.5 of D](f);
\coordinate[right =1.5 of B](g);
\draw[->] (B) to [right] node[auto] {} (g);
\draw[->] (D) to [right] node[auto] {} (f);
\coordinate[left = 1.5 of A](h);
\coordinate[left = 1.5 of C](i);
\draw[->] (h) to [right] node[auto] {} (A);
\draw[->] (i) to [right] node[auto] {} (C);
\end{tikzpicture}
\caption{\textit{Since the jobs are not interfering with each other, the network on the left is equivalent to the graph on the right. Node 3' is a copy of node 3: it works at the same speed and induces the same service requirements.}}
\label{fig:graaf}
\end{figure}
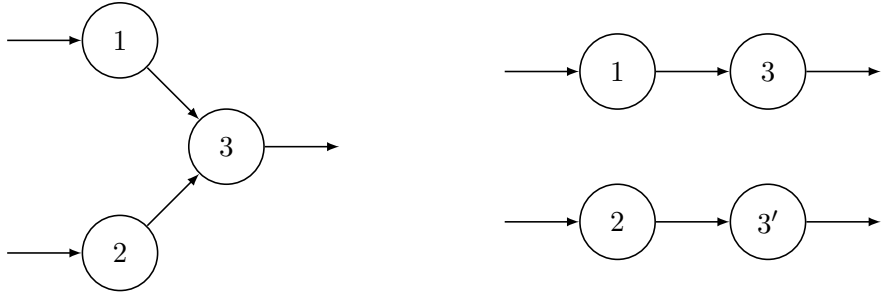

If one recognizes the above, then all feedforward networks reduce to parallel tandem systems in which the first node in each tandem system is fed by external input. The procedure to decompose a network into parallel tandems consists of finding all paths between nodes in which jobs either enter or leave the system. Each of these paths will subsequently be considered as a tandem queue, which are then set in parallel. 

To build up to the main result, we first study tandem systems in Section \ref{sec:tandem}. Subsequently, we put the tandem systems in parallel in Section \ref{sec:parallel} and finally we present the main theorem and some implications in Section \ref{sec:mainthm}. 

\subsection{Tandem systems}
\label{sec:tandem}
As announced, we proceed by studying tandem systems. In Section \ref{sec:parallel} below, we study $d$ parallel tandem systems, where $i=1,\ldots,d$. In this subsection we consider the $i$-th of these tandem systems.

Suppose that tandem $i$ has $S_i$ service facilities and the input process at the first node is Poisson, with a shot-noise arrival rate $\L_i(\cdot)$. We assume that jobs enter node $i1$. When they finish service, they enter node $i2$, etc., until they enter node $iS_i$ after which they leave the system. We use $ij$ as a subscript referring to node $j$ in tandem system $i$ and we refer to the node as node $ij$. Hence $N_{ij}(t)$ and $ J_{ij}$ denote the number of jobs in node $ij$ at time $t$, and a copy of a service requirement, respectively, where $j=1,\ldots,S_i$. 

Fix some time $t>0$. Again we derive results by splitting time into intervals of length $\Delta$. Denote by $M_{ij}(k,\D)$ the number of jobs present in node $ij$ at time $t$ that have entered node $i1$ between time $k\D$ and $(k+1)\D$; as we keep $t$ fixed we suppress it in our notation. 
Because jobs are not interfering with each other in the infinite-server realm, we can decompose the transform of interest:
\begin{equation}
\label{eq:chop}
\E \left(\prod_{j=1}^{S_i} z_{ij}^{N_{ij}(t)}\right) = \lim_{\D\downarrow0}\prod_{k=0}^{t/\D-1} \E \left(\prod_{j=1}^{S_i} z_{ij}^{M_{ij}(k,\D)}\right).
\end{equation}
Supposing that the arrival rate is a deterministic function of time $\l_i(\cdot)$,
by conditioning on the number of arrivals in the $k$-th interval,
\begin{align*}
 \E \left(\prod_{j=1}^{S_i} z_{ij}^{M_{ij}(k,\Delta)} \right)&= \sum_{m=0}^\infty e^{-\l_i(k\Delta) \Delta} \f{(\l_i(k\Delta)\Delta)^m}{m!} \left( f_i(k\Delta,z) \right)^m\\
&= \exp\Big(\Delta \l_i(k\Delta)(f_i(k\Delta,z)-1) \Big),
\end{align*}
where
\begin{equation}
\label{eq:defpsi}
f_i(u,z) := p_i(u) + \sum_{j=1}^{S_i} z_{ij} p_{ij}(u),
\end{equation}
in which $p_i(u)$ ($p_{ij}(u)$, respectively) denotes the probability that the job that entered tandem $i$ at time $u$ has already left the tandem (is in node $j$, respectively) at time $t$.
Note that
\begin{equation}
\label{eq:p}
p_i(u)		= \Pb\left(\sum_{\ell=1}^{S_i} J_{i\ell} < t-u\right),\quad
p_{ij}(u) 	= \Pb\left(\sum_{\ell=1}^{j-1} J_{i\ell} < t-u, \sum_{\ell=1}^{j} J_{i\ell}>t-u\right).
\end{equation}
Recognizing a Riemann sum and letting $\D\downarrow0$, we conclude that Eqn.\ \eqref{eq:chop} takes the following form:
\[
\E \left(\prod_{j=1}^{S_i} z_{ij}^{N_{ij}(t)} \right)= \exp\left(\int_0^t \lambda_i(u) (f_i(u,z)-1)\dif u\right).
\]
In case of a stochastic rate process $\Lambda_i(\cdot)$, we obtain
\begin{equation*}
\E\left.\left(\prod_{j=1}^{S_i} z_{ij}^{N_{ij}(t)} \,\right|\,\L_i(\cdot)\right) = \exp\left(\int_0^t \Lambda_i(u) (f_i(u,z)-1)\dif u\right).
\end{equation*}
Therefore it holds that
\begin{eqnarray*}
\E\left( \prod_{j=1}^{S_i} z_{ij}^{N_{ij}(t)} e^{-s\L_i(t)}\right)& =& \E\left(\E\left(\left.\prod_{j=1}^{S_i} z_{ij}^{N_{ij}(t)}e^{-s\L_i(t)}\,\right|\,\L_i(\cdot)\right)\right)\\& =& \E\left(e^{-s\L_i(t)} \E\left(\left.\prod_{j=1}^{S_i} z_{ij}^{N_{ij}(t)}\,\right|\,\L_i(\cdot)\right)\right),
\end{eqnarray*}
and we consequently find
\begin{equation}
\label{eq:trans2}
\E\left( \prod_{j=1}^{S_i} z_{ij}^{N_{ij}(t)} e^{-s\L_i(t)}\right)  = \E \exp\left(\int_0^t \Lambda_i(u) (f_i(u,z)-1)\dif u - s\L_i(t)\right).
\end{equation}

\subsection{Parallel (tandem) systems}
\label{sec:parallel}
Now that the tandem case has been analyzed, the next step is to put the tandem systems as described in Section \ref{sec:tandem} in parallel. We assume that there are $d$ parallel tandems. There are different ways in which dependence between the parallel systems can be created. Two relevant models are listed below, and illustrated in Fig.\ \ref{fig:M1M2}.

\begin{itemize}
\item[\textbf{(M1)}] Let $\L\equiv\L(\cdot)$  be a $d$-dimensional shot-noise process $(\L_1,\ldots,\L_d)$ where the shots in all $\L_i$ occur simultaneously (the shot distributions and decay rates may be different). The process $\L_i$, for $i=1,\ldots,d$, corresponds to the arrival rate of tandem system $i$. Each tandem system has an arrival process, in which the Cox processes are independent given their shot-noise arrival rates. 

\item[\textbf{(M2)}] Let $\L\equiv\L(\cdot)$ be the shot-noise rate of a Cox process. The corresponding Poisson process generates {\it simultaneous} arrivals in all tandems. 
\end{itemize}

\begin{figure}[h!]
\centering
\begin{tikzpicture}[>=latex ,auto ,node distance =4 cm and 5cm]
\node (L1) {$\Lambda_1$};
\node (L2) [below =0.8 of L1] {$\Lambda_2$};
\draw[<->] (L1) to (L2);
\node[box] (T1) [right = 1.2 of L1] {Tandem 1};
\path[->] (L1) edge (T1);
\node[box] (T2) [right = 1.2 of L2] {Tandem 2};
\path[->] (L2) edge (T2);
\coordinate[right =1.2 of T2](c2);
\draw[->] (T2) to [right] node[auto] {} (c2);
\coordinate[right =1.2 of T1](c1);
\draw[->] (T1) to [right] node[auto] {} (c1);
\node[box,right=4 of c1](T12){Tandem 1};
\coordinate[right =1.2 of T12](d1);
\draw[->] (T12) to [right] node[auto] {} (d1);
\node[box,right=4 of c2](T22){Tandem 2};
\coordinate[right =1.2 of T22](d2);
\draw[->] (T22) to [right] node[auto] {} (d2);
\node (L) [below right = 0.4 and 1 of c1] {$\Lambda$};
\node[phase] (split) [right=1 of L] {};
\draw[->] (L) to (split);
\coordinate[left=0 of T12] (e1);
\coordinate[left=0 of T22] (e2);
\draw[->] (split) to (e1);
\draw[->] (split) to (e2);
\end{tikzpicture}
\caption{\textit{Model (M1) is illustrated on the left, and Model (M2) is illustrated on the right. The rectangles represent tandem systems, which consist of an arbitrary number of nodes in series.}}
\label{fig:M1M2}
\end{figure}
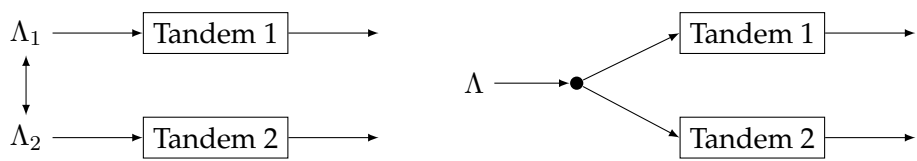

\begin{remark}
The model in which there is essentially one shot-noise process that generates arrivals for all queues independently, is a special case of Model (M1). This can be seen by setting all components of $\L=(\L_1,\ldots,\L_d)$ equal, by letting the shots and decay rate be identical.
\end{remark}
In Model (M1), correlation between the shot-noise arrival rates induces correlation between the numbers of jobs in the different queues. In Model (M2), correlation clearly appears because all tandem systems have the same input process. Of course, the tandem systems will not behave identically because the jobs may have different service requirements. In short, correlation across different tandems in Model (M1) is due to linked arrival rates, and correlation in Model (M2) is due to simultaneous arrival epochs. We feel that both versions are relevant, depending on the application, and hence we analyze both.

\paragraph{Analysis of (M1) ---}
Suppose that the dependency is of the type as in Model (M1). This means that the shots, in each component of $\L$, occur simultaneously. Recall the definition of $f_i$ as stated in Eqn.\ \eqref{eq:defpsi}. It holds that
\begin{align}
\label{eq:prelemmaM1}
\nonumber \E\left(\prod_{i=1}^d e^{-s_i\L_i(t)} \prod_{j=1}^{S_i} z_{ij}^{N_{ij}(t)}\right)
\nonumber &= \E\left(\prod_{i=1}^d \E\left(\left.\prod_{j=1}^{S_i} z_{ij}^{N_{ij}(t)} e^{-s_i\L_i(t)} \,\right|\, \L_i(\cdot)\right)\right)\\
\nonumber &= \E\left(\prod_{i=1}^d\left( e^{-s_i\L_i(t)} \E\left(\left.\prod_{j=1}^{S_i} z_{ij}^{N_{ij}(t)}\, \right| \,\L_i(\cdot)\right)\right)\right)\\
&=\E\exp\left(\sum_{i=1}^d\left(\int_0^t \L_i(u)(f_i(u,z)-1)\dif u - s_i\L_i(t)\right)\right),
\end{align}
where the last equality holds due to (\ref{eq:trans2}).

\paragraph{Analysis of (M2) ---}
Now suppose that the dependency in this model is of type (M2), i.e., there is one shot-noise process that generates simultaneous arrivals in the parallel tandem systems. 

First we assume a deterministic arrival rate function $\l(\cdot)$. Let $M_{ij}(k,\D)$ be the number of jobs present in tandem system $i$ at node $j$ at time $t$ that have arrived in the system between $k\D$ and $(k+1)\D$. Note that
\[
\E\left(\prod_{i=1}^d \prod_{j=1}^{S_i} z_{ij}^{N_{ij}(t)}\right)
= \lim_{\D\downarrow0} \prod_{k=0}^{t/\D-1} \E\left(\prod_{i=1}^d\prod_{j=1}^{S_i} z_{ij}^{M_{ij}(k,\D)}\right).
\]
To further evaluate the right hand side of the previous display, we observe that we can write
\[
\E\left(\prod_{i=1}^d \prod_{j=1}^{S_i} z_{ij}^{M_{ij}(k,\D)}\right)
= \sum_{m=0}^\infty e^{-\l(k\D)\D}\f{\l(k\D)\D)^m}{m!}(f(k\D,z))^m
=e^{\D\l(k\D)(f(k\D,z)-1)},\]where
\begin{equation}
\label{eq:defupsilon}
f(u,z) := \sum_{j=1}^d \sum_{\ell_j=1}^{S_{j+1}} p_{\ell_1,\ldots, \ell_d} \prod_{i=1}^d z_{i\ell_i};
\end{equation}
in this definition $p_{\ell_1,\ldots, \ell_d}\equiv p_{\ell_1,\ldots, \ell_d}(u)$ equals the probability that a job that arrived at time $u$ in tandem $i$ is in node $\ell_i$ at time $t$ (cf.\ Eqn.\ \eqref{eq:p}). The situation that $\ell_i=S_i+1$ means that the job left the tandem system; we define $z_{i,S_i+1} = 1$. 

In a similar fashion as before, we conclude that
\begin{equation}
\label{eq:prelemmaM2}
\E\left( \prod_{i=1}^d \prod_{j=1}^{S_i} z_{ij}^{N_{ij}(t)} e^{-s \L(t)} \right)= \E\exp\left(\int_0^t \L(u)(f(u,z)-1)\dif u - s\L(t)\right),
\end{equation}
with $f$ defined in Eqn.\ \eqref{eq:defupsilon}.

\begin{example}[Two-node parallel system]
\label{example:two-node parallel system}
In the case of a parallel system of two infinite-server queues, $f(u,z)$ simplifies to
\begin{align*}
f(u,z_{11},z_{21}) = \sum_{\ell_1=1}^2 \sum_{\ell_2=1}^2 z_{1\ell_1}z_{2\ell_2} p_{\ell_1,\ell_2} = z_{11}z_{21} p_{11} + z_{21}p_{21} + z_{11}p_{12} + p_{22}.
\end{align*}
\end{example}

\begin{remark}[Routing]
Consider a feedforward network with routing. As argued in the beginning of this section, the network can be decomposed as a parallel tandem system. In case there is splitting at some point, then one decomposes the network as a parallel system, in which each tandem $i$ receives the job with probability $q_i$, such that $\sum q_i=1$. This can be incorporated simply by adjusting the probabilities contained in $f_i$ in Eqn.\ \eqref{eq:prelemmaM1}, which are given in Eqn.\ \eqref{eq:p}, so that they include the event that the job joined the tandem under consideration. For instance, the expression for $p_i(u)$ in the left equation in \eqref{eq:p} would become
\[
\Pb\bigg(Q=i, \sum_{\ell=1}^{S_i} J_{i\ell}<t-u\bigg),
\]
where $Q$ is a random variable with a generalized Bernoulli (also called `categorical') distribution, where
\[
\Pb(\text{job is assigned to tandem } i) = \Pb(Q=i)=q_i,\quad\text{for } i=1,\ldots d,
\] 
with $\sum q_i=1$; the right equation in \eqref{eq:p} is adjusted similarly. Other than that, the analysis is the same for the case of splits.
\end{remark}

\begin{remark}[Networks with loops]
\label{example: network with loops}
So far we only considered feedforward networks. 
Networks with loops can be analyzed as well, but the notation becomes quite cumbersome. To show the method in which networks with loops and routing can be analyzed, we consider a specific example. Suppose that arrivals enter node one, after which they enter node two. After they have been served in node two, they go back to node one with probability $\eta$, or leave the system with probability $1-\eta$. In this case, with similar techniques as before, we can find
\[
\E z_1^{N_1(t)}z_2^{N_2(t)} = \exp\left(\int_0^t \L(u)(f(u,z_1,z_2)-1)\dif u \right),
\]
with 
\[
f(u,z_1,z_2) = \Pb(\,\text{job($u$) left system}) + \sum_{i=1}^2 z_i \Pb(\,\text{job($u$) is in node $i$}),
\]
in which $\text{job}(u)$ is the job that arrived at time $u$ and we are examining the system at time $t$. Now, if we denote service times in the $j$-th node by $J^{(j)}$, then, at a specific time $t$,
\[
\Pb(\,\text{job($u$) left system}) = \sum_{k=0}^\infty \Pb\left(\sum_{i=1}^{k+1} (J_i^{(1)}+J_i^{(2)}) \leq t-u \right) \eta^k(1-\eta).
\]
Analogously, $\Pb(\,\text{job($u$) is in node $1$})$ equals, by conditioning on the job having taken $k$ loops,
\[
\sum_{k=0}^\infty \eta^{k} \Pb\left(J_{k+1}^{(1)} + \sum_{i=1}^{k} (J_i^{(1)} + J_i^{(2)}) > t-u , \sum_{i=1}^{k} (J_i^{(1)} + J_i^{(2)}) \leq t-u\right);
\]
likewise, $\Pb(\,\text{job($u$) is in node $2$})$ equals
\[
\sum_{k=0}^\infty \eta^{k} \Pb\left(\sum_{i=1}^{k+1}( J_i^{(1)} + J_i^{(2)}) > t-u , J^{(1)}_{k+1} + \sum_{i=1}^{k}( J_i^{(1)} + J_i^{(2)}) \leq t-u\right).
\]
For example, in case all $J_i^{(j)}$ are independent and exponentially distributed with mean $1/\mu$, we can calculate those probabilities explicitly. Indeed, if we denote by $Y$ a Poisson process with rate $\mu$, then e.g.,
\begin{align*}
\Pb\left(\sum_{i=1}^{k+1} (J_i^{(1)} + J_i^{(2)}) > t-u , J^{(1)}_{k+1} +  \sum_{i=1}^{k} (J_i^{(1)} + J_i^{(2)} )\leq t-u\right) &= \Pb(Y(t-u)=2k+1)\\
&= e^{-\mu (t-u)} \f{(\mu (t-u))^{2k+1}}{(2k+1)!}
\end{align*}
and thus
\[
\Pb(\,\text{job($u$) is in node $2$}) = \sum_{m=0}^\infty \eta^m  e^{-\mu (t-u)} \f{(\mu (t-
u))^{2m+1}}{(2m+1)!}.
\]
A similar calculation can be done for the probability that the job is in node one. Recalling that a sum of exponentials has a Gamma distribution, we can write
\begin{eqnarray*}
f(u,z_1,z_2) &=&  z_1\sum_{m=0}^\infty \eta^m  e^{-\mu (t-u)} \f{(\mu (t-u))^{2m}}{(2m)!} + z_2 \sum_{m=0}^\infty \eta^m  e^{-\mu (t-u)} \f{(\mu (t-u))^{2m+1}}{(2m+1)!}\\
&&+ \,\sum_{m=0}^\infty \eta^m(1-\eta) F_{\G(2m+2,\m)}(t-u)\\
&=& z_1 e^{-\mu(t-u)} \cosh\left( {\mu\sq{\eta} (t-u)} \right) + z_2 \f{e^{-\mu(t-u)}}{\sq{\eta}}\sinh\left(\mu\sq{\eta}(t-u) \right)\\
&&\,+ (1-\eta)\sum_{
m=0}^\infty \eta^m F_{\G(2m+2,\m)}(t-u),
\end{eqnarray*}
where $F_{\G(2m+2, \m)}$ denotes the distribution function of a $\G$-distributed random variable with rate $\m$ and shape parameter $2m+2$.
\end{remark}

\subsection{Main result}
\label{sec:mainthm}
In this subsection we summarize and conclude with the following main result. Recall Definition \ref{def:ff} of a feedforward network. In the beginning of Section \ref{sec:Networks} we argued that we can decompose a feedforward network into parallel tandems. In Section \ref{sec:parallel} we studied exactly those systems, leading up to the following results.
\begin{theorem}
\label{thm:main}
Suppose we have a feedforward network of infinite-server queues, where the input process is a Poisson process with shot-noise arrival rate. Then the network can be decomposed into parallel tandem systems. In Model (M1), it holds that 
\begin{eqnarray*}
\lefteqn{\hspace{-35mm}\E \left(\prod_{i=1}^d \prod_{j=1}^{S_i} z_{ij}^{N_{ij}(t)} e^{-s\L_i(t)} \right)= \E\exp\left(\sum_{i=1}^d\left(\int_0^t \L_i(u)(f_i(u,z)-1)\dif u - s_i\L_i(t)\right)\right)}\\
&=&\exp\left(\nu \int_0^t \left(\beta(g(s,v))-1\right)\dif v\right),
\end{eqnarray*}
with $f_i(\cdot,\cdot)$ as defined in Eqn.\ \eqref{eq:defpsi} and where $g(s,v)$ is a vector-valued function in which component $i$ is given by
\[
s_i e^{-r_i(t-v)} - e^{r_iv} \int_v^t (f_i(u,z)-1) e^{-r_iu} \dif u.
\]
Furthermore, in Model (M2),
\begin{eqnarray*}
\lefteqn{\E\left( \prod_{i=1}^d \prod_{j=1}^{S_i} z_{ij}^{N_{ij}(t)}e^{-s\L(t)} \right)=\E\exp\left(\int_0^t \L(u)(f(u,z)-1)\dif u - s\L(t)\right)}\\
&=& \exp\left(\nu\int_0^t \left(\b\left( se^{-r(t-v)} - e^{rv} \int_v^t (f(u,z)-1)e^{-ru}\dif u \right)-1\right)\dif v\right),
\end{eqnarray*}
with $f(\cdot,\cdot)$ as defined in Eqn.\ \eqref{eq:defupsilon}.
\end{theorem}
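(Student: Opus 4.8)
The plan is to recognize that the theorem is an assembly of pieces already in place: the reduction of a feedforward network to a collection of parallel tandem systems (carried out at the start of Section~\ref{sec:Networks}, with split probabilities absorbed into the $f_i$ as in the routing discussion of Section~\ref{sec:parallel}), the identities~\eqref{eq:prelemmaM1} and~\eqref{eq:prelemmaM2} obtained in Section~\ref{sec:parallel}, and Lemma~\ref{lemma}. Concretely, the first equality in each of the two displays \emph{is} the final step of Eqn.~\eqref{eq:prelemmaM1} respectively Eqn.~\eqref{eq:prelemmaM2}, so the only real work is to evaluate the two expectations of exponentials of linear functionals of the shot-noise process appearing on their right-hand sides.

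For Model~(M2) I would simply invoke Lemma~\ref{lemma}. The coefficient of $\L(u)$ inside the integral in~\eqref{eq:prelemmaM2} is the map $(u,z)\mapsto f(u,z)-1$, with $f$ as in~\eqref{eq:defupsilon}; by~\eqref{eq:p} each $p_{\ell_1,\ldots,\ell_d}(u)$ is an affine combination of distribution functions of sums of the i.i.d.\ service requirements evaluated at $t-u$, hence monotone in $u$ and so piecewise continuous with at most countably many discontinuities, which is exactly the hypothesis of Lemma~\ref{lemma}. Applying the lemma with its function taken to be $f(\cdot,z)-1$ produces verbatim the stated formula for~(M2).

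Model~(M1) is the one requiring a small extension, because the components $\L_1,\dots,\L_d$ jump at \emph{common} Poisson epochs but are allowed \emph{different} decay rates $r_i$, so the scalar Lemma~\ref{lemma} does not literally cover it. Here I would first establish the $d$-dimensional analogue of the lemma. Writing the common epochs of the rate-$\nu$ Poisson process as $t_1,t_2,\dots$ and the associated shot vectors as $\mathbf B_k=(B_{1,k},\dots,B_{d,k})$, so that $\L_i(u)=\sum_{k:\,t_k\le u}B_{i,k}e^{-r_i(u-t_k)}$, there are a.s.\ finitely many epochs in $[0,t]$, so the finite sum and the $u$-integral may be interchanged; the exponent in~\eqref{eq:prelemmaM1} then collapses to $-\sum_{k:\,t_k\le t}\sum_{i=1}^d B_{i,k}\,g_i(s,t_k)$, where $g_i(s,v)=s_i e^{-r_i(t-v)}-e^{r_iv}\int_v^t(f_i(u,z)-1)e^{-r_iu}\dif u$ is precisely the $i$-th component of $g(s,v)$ (using $\int_v^t e^{-r_i(u-v)}(f_i(u,z)-1)\dif u=e^{r_iv}\int_v^t e^{-r_iu}(f_i(u,z)-1)\dif u$). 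Taking the expectation and applying Campbell's formula for the compound Poisson process with i.i.d.\ vector marks, namely $\E\exp\big(\sum_{k:\,t_k\le t}\psi(t_k,\mathbf B_k)\big)=\exp\big(\nu\int_0^t(\E_{\mathbf B}e^{\psi(v,\mathbf B)}-1)\dif v\big)$, with $\psi(v,\mathbf b)=-\sum_i b_i g_i(s,v)$ so that $\E_{\mathbf B}e^{\psi(v,\mathbf B)}=\beta(g(s,v))$ by the multidimensional definition of $\beta$, yields the claimed product form $\exp(\nu\int_0^t(\beta(g(s,v))-1)\dif v)$. (Equivalently, one can just mimic line by line the proof of Lemma~\ref{lemma} in Appendix~\ref{app:proof}, conditioning on $P_B(t)$ and on the ordered epochs.)

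The main obstacle is bookkeeping rather than anything deep: in~(M1) one must handle the vector-marked Poisson structure with heterogeneous $r_i$ carefully enough to see that the exponent genuinely separates over epochs into the displayed form, and one must justify the interchanges of summation, integration and expectation --- which rests on the a.s.\ finiteness of the number of epochs in $[0,t]$ together with $\E B_i<\infty$ (implicit in the transforms being finite). Everything else is substitution, plus the routine check, exactly as in~(M2), that $f_i(\cdot,z)-1$ satisfies the piecewise-continuity hypothesis of Lemma~\ref{lemma}.
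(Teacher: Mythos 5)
Your proposal is correct and takes essentially the same route as the paper, whose entire proof consists of the observation that the first equalities in the two displays are Eqns.~\eqref{eq:prelemmaM1} and~\eqref{eq:prelemmaM2} and that the second equalities follow by applying Lemma~\ref{lemma}. The only place you go beyond the paper is in making explicit the $d$-dimensional extension of Lemma~\ref{lemma} needed for Model (M1) with common shot epochs but distinct decay rates $r_i$ --- which you justify via Campbell's formula for the vector-marked Poisson epochs (or, equivalently, by repeating the appendix argument) --- a step the paper leaves implicit when it invokes the scalar lemma together with the multidimensional $\beta$.
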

\begin{proof}
These are Eqns.\ \eqref{eq:prelemmaM1} and \eqref{eq:prelemmaM2} to which we applied Lemma \ref{lemma}.
\end{proof}
Next we calculate covariances between nodes in tandem and parallel thereafter.
\paragraph{Covariance in Tandem System ---}Consider a tandem system consisting of two nodes and we want to analyze the covariance between the numbers of jobs in the nodes. Dropping the index of the tandem system, denote by $N_1(\cdot)$ and $N_2(\cdot)$ the numbers of jobs in node 1 and 2, respectively. Using Eqn.\ \eqref{eq:prelemmaM1}, differentiation yields 
\[
\E N_2(t) = \int_0^t \Pb(J_1<t-u, J_1+J_2>t-u) \E \L(u) \dif u
\]
and 
\[
\E N_1(t) N_2(t) = \E\left(\int_0^t \Pb(J_1<t-u, J_1+J_2>t-u) \L(u)\dif u \int_0^t \Pb(J_1>t-v)\L(v) \dif v\right)
\]
so that
\begin{eqnarray*}
\lefteqn{\Cov(N_1(t),N_2(t))}\\& =& \Cov\left(\int_0^t \Pb(J_1<t-u, J_1+J_2>t-u) \L(u)\dif u, \int_0^t \Pb(J_1>t-v)\L(v) \dif v\right)\\
&=& 2 \int_0^t \int_v^t \Pb(J_1<t-u, J_1+J_2>t-u)\Pb(J_1>t-v) \Cov(\L(u),\L(v)) \dif u \dif v\\
&= &2 \int_0^t \int_v^t \Pb(J_1<t-u, J_1+J_2>t-u)\Pb(J_1>t-v) e^{-r(u-v)} \Var \L(v) \dif u \dif v,
\end{eqnarray*}
cf.\ Eqn.\ \eqref{eq:momentsSN} for the last equality.

\paragraph{Covariance parallel (M1) ---}
Consider a parallel system consisting of two nodes only. In order to study covariance  in the parallel (M1) case, we need a result about the covariance of the corresponding shot-noise process.
\begin{lemma}
\label{cov}
Let $\L_1(\cdot),\L_2(\cdot)$ be shot-noise processes of which the jumps occur simultaneously according to a Poisson arrival process with rate $\nu$. Let the decay be exponential with rate $r_1, r_2$, respectively. Then it holds that, for $\d>0$,
\begin{equation}
\label{eq:covl1l2}
\Cov(\L_1(t),\L_2(t+\d)) = e^{-r_2\d} \Cov(\L_1(t),\L_2(t)) = e^{-r_2\d} \f{\nu \E B_{11} B_{12}}{r_1+r_2}(1-e^{-(r_1+r_2)t}),
\end{equation}
which, in case $\L_1=\L_2$, reduces to
\[
\Cov(\L_i(t),\L_i(t+\d)) = e^{-r_i\d} \Var \L_i(t),\quad\text{for } i=1,2,
\]
corresponding to {\em \cite[{\em p}.\ 394]{Ross}} and Eqn.\ \eqref{eq:momentsSN}.
\end{lemma}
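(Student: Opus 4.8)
\textbf{Proof plan for Lemma \ref{cov}.}
The plan is to compute $\Cov(\L_1(t),\L_2(t+\d))$ directly from the shot-noise representation \eqref{eq:SN}, exploiting the fact that both processes are driven by the \emph{same} Poisson process $P_B$ of rate $\nu$ with common jump epochs $t_1,t_2,\ldots$, while at the $k$-th epoch the pair of shot sizes $(B_{1k},B_{2k})$ is sampled from a joint distribution (with marginals distributed as $B_1=B_{11}$ and $B_2=B_{12}$, and possibly correlated). Since $\L_1(0)=\L_2(0)=0$ by our standing assumption, we have $\L_1(t)=\sum_{i=1}^{P_B(t)} B_{1i}e^{-r_1(t-t_i)}$ and $\L_2(t+\d)=\sum_{j=1}^{P_B(t+\d)} B_{2j}e^{-r_2(t+\d-t_j)}$.

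The first step is to handle the shift: write $\L_2(t+\d)=e^{-r_2\d}\L_2(t)+ \big(\text{contribution of shots in }(t,t+\d]\big)$. The shots arriving in $(t,t+\d]$ are independent of everything that happened up to time $t$ (independent increments of $P_B$ and independent shot marks), hence uncorrelated with $\L_1(t)$; this immediately yields $\Cov(\L_1(t),\L_2(t+\d))=e^{-r_2\d}\,\Cov(\L_1(t),\L_2(t))$, which is the first equality. The second step is to evaluate $\Cov(\L_1(t),\L_2(t))$. The cleanest route is to condition on the Poisson process, or equivalently to use the Campbell-type formula: conditionally on $P_B(t)=n$, the epochs $t_1,\ldots,t_n$ are distributed as the order statistics of $n$ i.i.d.\ uniforms on $[0,t]$, and the marks are i.i.d.\ pairs. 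A marginally cleaner variant is to invoke the known first moments $\E\L_i(t)=\nu\E B_{1i}(1-e^{-r_it})/r_i$ from \eqref{eq:momentsSN} and then compute $\E[\L_1(t)\L_2(t)]$ by splitting the double sum $\sum_{i,j} B_{1i}B_{2j}e^{-r_1(t-t_i)-r_2(t-t_j)}$ into diagonal terms $i=j$ (which contribute $\E[B_{11}B_{12}]$ times the appropriate exponential) and off-diagonal terms $i\neq j$ (which, by independence of distinct marks, contribute $\E B_{11}\,\E B_{12}$ and exactly cancel against $\E\L_1(t)\,\E\L_2(t)$). The diagonal term, after taking the expectation over the Poisson process (a single integral $\nu\int_0^t e^{-(r_1+r_2)(t-v)}\dif v=\frac{\nu}{r_1+r_2}(1-e^{-(r_1+r_2)t})$), gives $\Cov(\L_1(t),\L_2(t))=\frac{\nu\E B_{11}B_{12}}{r_1+r_2}(1-e^{-(r_1+r_2)t})$, as claimed.

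The final, purely bookkeeping, step is the specialization $\L_1=\L_2$: set $r_1=r_2=r_i$ and note $B_{11}=B_{12}$, so $\E B_{11}B_{12}=\E B_i^2$ and the formula collapses to $e^{-r_i\d}\cdot\frac{\nu\E B_i^2}{2r_i}(1-e^{-2r_it})=e^{-r_i\d}\Var\L_i(t)$ by \eqref{eq:momentsSN}, matching \cite[p.\ 394]{Ross}.

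The only genuine subtlety — and the step I would be most careful about — is the separation of diagonal and off-diagonal contributions in the double sum, because the number of terms $P_B(t)$ is itself random and correlated across the two sums (it is the \emph{same} random number). The correct way to organize this is either (i) to condition on $P_B(t)=n$ throughout, carry out the finite-sum algebra, and then take the expectation over $n\sim\text{Poisson}(\nu t)$, using $\E[n]=\nu t$ for the diagonal and $\E[n(n-1)]=(\nu t)^2$ for the off-diagonal; or (ii) to use the Laplace-functional / Campbell formula for the underlying marked Poisson process directly. Both are standard, but writing it out cleanly requires keeping the randomness of $P_B(t)$ explicit rather than treating the sums as having a fixed length. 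Alternatively — and this is probably the shortest write-up — one can simply quote Lemma \ref{lemma} (or a two-dimensional analogue of the computation behind \eqref{eq:momentsSN}) with suitably chosen test functions and read off the cross-covariance from the joint transform; I would mention this as the one-line alternative but present the direct moment computation as the main argument since it is self-contained.
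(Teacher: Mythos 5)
Your proposal is correct and follows essentially the same route as the paper's proof: the same decomposition $\L_2(t+\d)=e^{-r_2\d}\L_2(t)+\bigl(\text{shots in }(t,t+\d]\bigr)$ for the first equality, and the same conditioning on the Poisson count with i.i.d.\ uniform epochs and a diagonal/off-diagonal split of the double sum (using $\E[n]=\nu t$ and $\E[n(n-1)]=(\nu t)^2$) for the second. Your only deviation is a mild streamlining: you get the first equality in one line by noting that the shots in $(t,t+\d]$ are independent of $\mathcal{F}_t$ and hence uncorrelated with $\L_1(t)$, whereas the paper reaches the same conclusion by explicitly conditioning on the number and locations of those shots and resumming the Poisson series.
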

\begin{proof}
See Appendix \ref{app:cov}.
\end{proof}
By making use of Eqn.\ \eqref{eq:prelemmaM1}, we find 
\[
\E N_1(t) N_2(t) = \E\left( \int_0^t \Lambda_1(u)\Pb(J_1>t-u)\dif u \int_0^t \Lambda_2(v)\Pb(J_2>t-v) \dif v\right). 
\]
This implies
\begin{eqnarray*}
\lefteqn{\Cov(N_1(t),N_2(t)) = \Cov\left(\int_0^t \L_1(u)\Pb(J_1>t-u)\dif u, \int_0^t \L_2(v)\Pb(J_2>t-v)\dif v\right)}\\
&=&2\int_0^t \int_v^t \Cov(\L_1(u), \L_2(v)) \Pb(J_1>t-u)\Pb(J_2>t-v) \dif u \dif v\\
&=&2\int_0^t \int_v^t \f{\nu \E B_{11}B_{12}}{r_1+r_2} \left(1-e^{-(r_1+r_2)v	}\right)e^{-r_2(u-v)}   \Pb(J_1>t-u)\Pb(J_2>t-v) \dif u \dif v
\end{eqnarray*}
where we made use of the fact that, for $u\geq v$,
\begin{align*}
\Cov(\L_1(u),\L_2(v)) = \f{\nu \E B_{11}B_{12}}{r_1+r_2} \left(1-e^{-(r_1+r_2)v}\right)e^{-r_2(u-v)},
\end{align*}
cf.\ Lemma \ref{cov}.

\paragraph{Covariance parallel (M2) ---}
Extracting the mixed moment from the transform in Eqn.\ \eqref{eq:prelemmaM2}, we derive directly that
\begin{eqnarray*}
\E N_1(t)N_2(t) &=& \E\left( \int_0^t \L(u) \Pb(J_1>t-u,J_2>t-u)\dif u\right)\\
&& +\, \E \left(\int_0^t \L(u) \Pb(J_1>t-u)\dif u \int_0^t \L(u) \Pb(J_2>t-u)\dif u\right).
\end{eqnarray*}
This implies
\begin{eqnarray*}
\Cov(N_1(t),N_2(t)) &=& \Cov\left(\int_0^t \L(u)\Pb(J_1>t-u)\dif u, \int_0^t \L(u)\Pb(J_2>t-u)\dif u\right)\\
&&+ \,\int_0^t \E \Lambda(u) \Pb(J_1>t-u,J_2>t-u) \dif u.
\end{eqnarray*}

The following proposition compares the correlations present in Model (M1) and (M2). In the proposition we refer to the number of jobs in queue $j$ in Model (M$i$) at time $t$ as $N_j^{(i)}(t)$, for $i=1,2$. We find the anticipated result, that the correlation in Model (M2) is stronger than in Model (M1).
\begin{proposition}
Let $\Lambda(\cdot)$ be the shot-noise process that generates simultaneous arrivals in both queues and let $\Lambda_1(\cdot), \Lambda_2(\cdot)$ be processes that have simultaneous jumps and generate arrivals in both queues independently. Suppose that $ \Lambda_1(t) \stackrel{{\rm d}}{=} \Lambda_2(t) \stackrel{{\rm d}}{=} \Lambda(t)$, for $t\geq 0$. Then, for any $t\ge 0$, 
\[
\Corr(N_1^{(1)}(t),N_2^{(1)}(t)) \leq \Corr(N_1^{(2)}(t),N_2^{(2)}(t)).
\]
\end{proposition}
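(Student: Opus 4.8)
The plan is to write each correlation as a ratio and to argue that the two denominators coincide, so that the claim reduces to a comparison of covariances in which Model (M2) dominates. For the denominators, note first that in \emph{either} model the marginal law of $N_j(t)$, $j\in\{1,2\}$, is mixed Poisson: by the argument leading to \eqref{eq:randompar}, $N_j^{(1)}(t)$ is Poisson with random parameter $\Phi_j^{(1)}:=\int_0^t\L_j(u)\Pb(J_j>t-u)\dif u$ and $N_j^{(2)}(t)$ is Poisson with random parameter $\Phi_j:=\int_0^t\L(u)\Pb(J_j>t-u)\dif u$, so by the law of total variance (cf.\ \eqref{eq:VN})
\[
\Var N_j^{(1)}(t)=\Var\Phi_j^{(1)}+\E\Phi_j^{(1)},\qquad \Var N_j^{(2)}(t)=\Var\Phi_j+\E\Phi_j.
\]
The hypothesis $\L_j(t)\eqd\L(t)$ gives $\E\L_j(t)=\E\L(t)$ and $\Var\L_j(t)=\Var\L(t)$ for all $t\ge0$; matching the explicit expressions in \eqref{eq:momentsSN} as functions of $t$ forces $r_1=r_2=r$ together with $\E B_{1j}=\E B$ and $\E B_{1j}^2=\E B^2$. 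With $r_j=r$, Lemma \ref{cov} gives $\Cov(\L_j(u),\L_j(v))=e^{-r|u-v|}\Var\L_j(\min(u,v))=\Cov(\L(u),\L(v))$, hence $\Var\Phi_j^{(1)}=\Var\Phi_j$ and $\E\Phi_j^{(1)}=\E\Phi_j$, and therefore $\Var N_j^{(1)}(t)=\Var N_j^{(2)}(t)$ for $j=1,2$. The two correlations thus share the same (for $t>0$ strictly positive) denominator, and it suffices to show $\Cov(N_1^{(1)}(t),N_2^{(1)}(t))\le\Cov(N_1^{(2)}(t),N_2^{(2)}(t))$.

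Next I would reuse the covariance computations carried out above. Since in (M1) the two Cox streams are conditionally independent given the rates, the conditional covariance vanishes and $\Cov(N_1^{(1)}(t),N_2^{(1)}(t))=\Cov(\Phi_1^{(1)},\Phi_2^{(1)})$, whereas in (M2) the simultaneous arrival epochs contribute an extra nonnegative term,
\[
\Cov(N_1^{(2)}(t),N_2^{(2)}(t))=\Cov(\Phi_1,\Phi_2)+\int_0^t\E\L(u)\,\Pb(J_1>t-u,J_2>t-u)\dif u.
\]
Writing $\Cov(\Phi_1^{(1)},\Phi_2^{(1)})=\int_0^t\!\int_0^t\Cov(\L_1(u),\L_2(v))\,\Pb(J_1>t-u)\Pb(J_2>t-v)\dif u\dif v$ and likewise $\Cov(\Phi_1,\Phi_2)=\int_0^t\!\int_0^t\Cov(\L(u),\L(v))\,\Pb(J_1>t-u)\Pb(J_2>t-v)\dif u\dif v$, and observing that the kernel $\Pb(J_1>t-u)\Pb(J_2>t-v)$ and the extra (M2) term are nonnegative, it is enough to establish the pointwise domination $\Cov(\L_1(u),\L_2(v))\le\Cov(\L(u),\L(v))$ for all $u,v\in[0,t]$.

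This pointwise inequality is the crux. For $u\ge v$, Lemma \ref{cov} (with $r_1=r_2=r$) gives $\Cov(\L_1(u),\L_2(v))=e^{-r(u-v)}\Cov(\L_1(v),\L_2(v))=e^{-r(u-v)}\f{\nu\E B_{11}B_{12}}{2r}\left(1-e^{-2rv}\right)$; since the shots are nonnegative, Cauchy--Schwarz gives $\E B_{11}B_{12}\le\sqrt{\E B_{11}^2\,\E B_{12}^2}=\E B^2$, so $\Cov(\L_1(v),\L_2(v))\le\Var\L(v)$ and hence $\Cov(\L_1(u),\L_2(v))\le e^{-r(u-v)}\Var\L(v)=\Cov(\L(u),\L(v))$; the case $u<v$ is identical with $u$ and $v$ interchanged. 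Integrating this inequality against the nonnegative kernel yields $\Cov(\Phi_1^{(1)},\Phi_2^{(1)})\le\Cov(\Phi_1,\Phi_2)$, and adding the nonnegative extra term of the (M2) covariance together with the equality of variances from the first step gives $\Corr(N_1^{(1)}(t),N_2^{(1)}(t))\le\Corr(N_1^{(2)}(t),N_2^{(2)}(t))$. I expect the only genuinely delicate point to be the reduction in the first paragraph — checking that equality in law of the arrival \emph{rates} at each fixed time really forces equality of the full marginal queue-length distributions (equivalently $r_1=r_2=r$ and matching first two shot moments); everything downstream is a monotone, elementary covariance comparison.
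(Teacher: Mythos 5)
Your proposal is correct, and its skeleton is the paper's: first argue that the two correlations have the same denominator, then compare covariances using the (M1) and (M2) expressions for $\Cov(N_1(t),N_2(t))$ derived in Section \ref{sec:mainthm}. Where you genuinely diverge is in how each step is justified, and your version is the more careful and more general one. The paper asserts that $\Lambda_j(t)\eqd\Lambda(t)$ makes all $N_i^{(j)}(t)$ equal in distribution, and that the difference of the two covariances equals exactly the extra (M2) term $\E\int_0^t\Lambda(u)\Pb(J_1>t-u,J_2>t-u)\dif u$; the latter amounts to claiming that the rate-driven parts coincide, i.e.\ $\Cov\bigl(\int_0^t\Lambda_1(u)\Pb(J_1>t-u)\dif u,\int_0^t\Lambda_2(v)\Pb(J_2>t-v)\dif v\bigr)=\Cov\bigl(\int_0^t\Lambda(u)\Pb(J_1>t-u)\dif u,\int_0^t\Lambda(v)\Pb(J_2>t-v)\dif v\bigr)$, which by Lemma \ref{cov} requires $\E B_{11}B_{12}=\E B^2$ — essentially the special case of identical shots, $\Lambda_1=\Lambda_2$. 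Under the literal hypotheses (simultaneous jump epochs, equal one-dimensional marginals, possibly different shot sizes per component) this is an inequality, not an equality, and your Cauchy--Schwarz step $\E B_{11}B_{12}\le\sqrt{\E B_{11}^2\,\E B_{12}^2}=\E B^2$, combined with the pointwise covariance domination and the nonnegative kernel, is precisely what closes that gap. Similarly, your moment-matching argument for the denominators ($\Lambda_j(t)\eqd\Lambda(t)$ for all $t$ forces $r_1=r_2=r$ and equal first two shot moments, hence equal autocovariances and equal $\Var N_j^{(i)}(t)$) makes precise a step the paper states in one line, and it is the right level of care, since the law of $N_j(t)$ depends on the path law of the rate, not merely its time-$t$ marginal. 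One small remark: if one allows the two models different jump rates, the matching pins down the products $\nu\E B_{1j}=\nu\E B$ and $\nu\E B_{1j}^2=\nu\E B^2$ rather than the moments themselves, but that is all your argument uses, so nothing changes. In short, your route proves the proposition in its stated generality, at the cost of one extra (elementary) inequality that the paper's shorter argument implicitly bypasses.
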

\begin{proof}
Because of the assumption $ \Lambda_1(t) \stackrel{{\rm d}}{=} \Lambda_2(t) \stackrel{{\rm d}}{=} \Lambda(t)$, we have that, for all combinations $i,j\in\{1,2\}$, the $N_i^{(j)}(t)$ are equal in distribution. Therefore it is sufficient to show that
\[
\Cov(N_1^{(1)}(t), N_2^{(1)}(t)) \leq \Cov(N_1^{(2)}(t), N_2^{(2)}(t)).
\]
The expressions for the covariances, which are derived earlier in this section, imply that
\[\Cov(N_1^{(2)}(t), N_2^{(2)}(t)) - \Cov(N_1^{(1)}(t), N_2^{(1)}(t))= \E \int_0^t \Lambda(u) \Pb(J_1>t-u, J_2>t-u)\dif u,\]
which is non-negative, as desired.
\end{proof}

\section{Concluding remarks}
\label{sec:concluding remarks}
We have considered networks of infinite-server queues with shot-noise-driven Coxian input processes. For the single queue, we found explicit expressions for the Laplace transform of the joint distribution of the number of jobs and the driving shot-noise arrival rate, as well as a functional central limit theorem of the number of jobs in the system under a particular scaling. The results were then extended to a network context: we derived an expression for the joint transform of the numbers of jobs in the individual queues, jointly with the values of the driving shot-noise processes. 

We included the functional central limit theorem for the single queue, but it is anticipated that a similar setup carries over to the network context, albeit at the expense of considerably more involved notation. Our future research will include the study of the departure process of a single queue;
the output stream should remain Coxian, but of another type than the input process.

\subsection*{Acknowledgements}
The authors thank Marijn Jansen for insightful discussions about the functional central limit theorem in this paper. The research for this paper is partly funded by the NWO Gravitation Project NETWORKS, Grant Number 024.002.003. The research of Onno Boxma was also partly funded by the Belgian Government, via the IAP Bestcom Project. 

\begin{appendices}
\section{Proof of Lemma \ref{lemma}} \label{app:proof}
There are various ways to prove this result; we here include a procedure that intensively relies on the probabilistic properties of the shot-noise process involved.
Observe that, recognizing a Riemann sum,
\begin{equation}
\label{RS}
\int_0^t f(u,z)\Lambda(u)\,\dif u =\lim_{\Delta\downarrow 0} \Delta \sum_{k=1}^{t/\Delta} f(k\Delta,z)\Lambda(k\Delta).
\end{equation}
With $P_B(t)$ a Poisson process with rate $\nu$ and the $U_i$ i.i.d.\ samples from a uniform distribution on $[0,1]$, it holds that
\[\Lambda(k\Delta) = \sum_{\ell=1}^k \sum_{i = P_B({(\ell-1)\Delta})+1}^{P_B({\ell\Delta})} B_i e^{-r\Delta U_i}
e^{-r(k-\ell)\Delta}.\]
We thus obtain that the expression in (\ref{RS}) equals (where the equality follows by interchanging the order of the summations)
\begin{align*}
&\lim_{\Delta\downarrow 0} \Delta \sum_{k=1}^{t/\Delta}f( k\D,z)\sum_{\ell=1}^k \sum_{i = P_B({(\ell-1)\Delta})+1}^{P_B({\ell\Delta})} B_i  e^{-r\Delta U_i}
e^{-r(k-\ell)\Delta}\\
&=\,\lim_{\Delta\downarrow 0} \Delta \sum_{\ell=1}^{t/\Delta}\sum_{i = P_B({(\ell-1)\Delta})+1}^{P_B({\ell\Delta})}B_ie^{-r\Delta U_i} \sum_{k=\ell}^{t/\Delta}   f( k\D,z)
e^{-r(k-\ell)\Delta},
\end{align*}
which behaves as
\[\lim_{\Delta\downarrow 0} \sum_{\ell=1}^{t/\Delta}e^{r\ell\Delta}
\int_{\ell\Delta}^t f(u,z) e^{-ru}\dif u\sum_{i = P_B({(\ell-1)\Delta})+1}^{P_B({\ell\Delta})} B_i e^{-r\Delta U_i} .\]
Furthermore, we have the representation
\[
\Lambda(t) = \lim_{\Delta\downarrow0} \sum_{\ell=1}^{t/\Delta} \sum_{i=P((\ell-1)\Delta) +1}^{P_B(\ell\Delta)} B_i e^{-r(t-\ell\Delta + \Delta U_i)}.
\]
We conclude that $\E z^{N(t)} e^{-s\Lambda(t)}$ equals
\begin{equation}
\label{eq:v1}
\lim_{\Delta\downarrow 0}  {\mathbb E}\exp\left(
 \sum_{\ell=1}^{t/\Delta}\sum_{i = P_B({(\ell-1)\Delta})+1}^{P_B({\ell\Delta})} B_i \left(e^{-r\Delta U_i}e^{r\ell\Delta}
\int_{\ell\Delta}^t f(u,z) e^{-ru}\dif u - s   e^{-r(t-\ell\Delta+\Delta U_i)}\right)\right).
\end{equation}
Conditioning on the values of $P_B(\ell\Delta)-P_B((\ell-1)\Delta)$, for $\ell=1,\ldots, t/\Delta$, and using that the $B_i$ are i.i.d., we find that the expression in Eqn.\ \eqref{eq:v1} equals
\begin{align*}
&=\lim_{\Delta\downarrow 0 }  \prod_{\ell=1}^{t/\Delta} e^{-\nu \Delta} \sum_{k_\ell=0}^\infty \f{(\nu\Delta)^{k_\ell}}{k_{\ell}!} \left(\E\exp\left(B_1\left(e^{-r\Delta U_i}e^{r\ell\Delta} \int_{\ell\Delta}^t f(u,z) e^{-ru} \dif u - s e^{-r(t-\ell\Delta+\Delta U_i)}\right) \right)\right)^{k_\ell}\\
&=\lim_{\Delta\downarrow 0 } \prod_{\ell=1}^{t/\Delta} e^{-\nu \Delta} \exp \left(\nu\Delta \E\exp\left(B_1\left(e^{-r\Delta U_i}e^{r\ell\Delta} \int_{\ell\Delta}^t f(u,z) e^{-ru} \dif u - s e^{-r(t-\ell\Delta+\Delta U_i)}\right) \right)\right),
\end{align*}
which can be written as
\[\lim_{\Delta\downarrow 0}\exp\left(\nu\Delta \sum_{\ell=1}^{t/\Delta} \left(\beta\left( s e^{-r(t-\ell\Delta+\Delta U_i)} -e^{-r\Delta U_i}e^{r\ell\Delta}
\int_{\ell\Delta}^t f(u,z) e^{-ru}\dif u  \right) -1\right)\right).\]
The lemma now follows from continuity of the exponent and the definition of the Riemann integral.

\section{Proof of Lemma \ref{cov}}
\label{app:cov}
Let $P_B(\cdot)$ be the Poisson process with rate $\nu$, corresponding to the occurences of shots, and let ${\mathcal E}_{t,\delta}(n)$ be the event that $P_B(t+\d)-P_B(t)=n$. By conditioning on the number of shots in the interval $(t,t+\d]$, we find
\begin{align*}
\E \L_1(t) \L_2(t+\d) &= \sum_{n=0}^\infty \E(\L_1(t) \L_2(t+\d) \,|\, {\mathcal E}_{t,\delta}(n)) \Pb({\mathcal E}_{t,\delta}(n))\\
&= \sum_{n=0}^\infty \E(\L_1(t) \L_2(t+\d)\, |\,{\mathcal E}_{t,\delta}(n)) \;  \f{ (\d \nu)^n}{n!} e^{-\nu\d}.
\end{align*}

We proceed by rewriting the conditional expectation as
\begin{align*}
\E(\L_1(t) \L_2(t+\d) \,|\, {\mathcal E}_{t,\delta}(n) )= \f{1}{\delta^n} \Int_{t}^{t+\delta} \dots \Int_{t}^{t+\delta} \E(\L_1(t) \L_2(t+\d) \,|\, \mathcal{F}_{t_1,\ldots,t_n,\delta}(n)) \dif t_1  \dots \dif t_n,
\end{align*}
denoting by $\mathcal{F}_{t_1,\ldots,t_n,\delta}(n)$  the event ${\mathcal E}_{t,\delta}(n)$ and the arrival epochs are $t_1,\ldots,t_n$. Note that we have due to Eqn.\ \eqref{eq:SN}, conditional on $\mathcal{F}_{t_1,\ldots,t_n,\delta}(n)$, the distributional equality
\begin{equation}
\label{eq:disteq}
\L_2(t+\delta) = \L_2(t)e^{-r_2\d} + \sum_{i=1}^n B_{i2} e^{-r_2(t+\delta-t_i)},
\end{equation}
and consequently
\begin{equation}
\label{eq:tbi}
\E\left(\L_1(t)\L_2(t+\delta) \,|\, \mathcal{F}_{t_1,\ldots,t_n,\delta}(n) \right)=\E \L_1(t)\L_2(t) e^{-r_2\d} + \E \L_1(t) \sum_{i=1}^n \E B_{i2} e^{-r_2(t+\delta-t_i)}.
\end{equation}
Note that for all $i=1,\ldots,n$ we have
\begin{align}
\label{eq:int}
 \Int_{t}^{t+\delta} \dots \Int_{t}^{t+\delta}\Int_{t}^{t+\delta} e^{-r_2(t+\delta-t_i)}  \dif t_1 \dif t_2 \dots \dif t_n = \f1{r_2}(1-e^{-r_2\d}) \d^{n-1}.
\end{align}
After unconditioning Eqn.\ \eqref{eq:tbi} with respect to the arrival epochs by integrating over all $t_i$ from $t$ to $t+\d$ and dividing by $\d^n$, we thus obtain
\[
\E(\L_1(t) \L_2(t+\delta) \,| \,{\mathcal E}_{t,\delta}(n) )=\E \L_1(t) \E\L_2(t) e^{-r_2\d} 
+ \E \L_1(t) \f{1}{r_2\d}(1-e^{-r_2\d})n \E B_{12}
\]
and hence, denoting $\L_i:=\lim_{t\to\infty}\L_i (t)$ for $i=1,2$,
\begin{eqnarray*}
\E \L_1(t) \L_2(t+\d) &=& \sum_{n=0}^\infty \left(\E\L_1(t)\L_2(t)e^{-r_2\d} + \E \L_1(t)\f{1}{r_2\d}(1-e^{-r_2\d}) n \E B_{12}\right) \frac{(\d \nu)^n}{n!} e^{-\nu\d}\\
&=&\E\L_1(t)\L_2(t) e^{-r_2\d} + (1-e^{-r_2\d})\E \L_1(t)\E\L_2\\
&=& \E\L_1(t)\E\L_2 + e^{-r_2\d}\big(\E\L_1(t)\L_2(t) - \E\L_1(t)\E\L_2\big),
\end{eqnarray*}
where we made use of $\E\L_i = {\nu\E B_{1i}}/{r_i}$. It follows that
\begin{eqnarray*}
\Cov(\L_1(t), \L_2(t+\d)) &=& \E \L_1(t) \L_2(t+\d) - \E \L_1(t) \E \L_2(t+\d)\\
&=& \E \L_1(t)\L_2(t) e^{-r_2\d} + (1-e^{-r_2\d}) \E \L_1(t) \E \L_2\\
&&-\, \E \L_1(t)\big(\E\L_2(t) e^{-r_2\d} + (1-e^{-r_2\d})\E \L_2 \big),
\end{eqnarray*}
where the equality $\E\L_2(t+\delta)=\E\L_2(t)e^{-r_2\delta} + (1-e^{-r_2\delta})\E\L_2$ is used, which can be directly checked using the expressions for the mean in Eqn.\ \eqref{eq:momentsSN}. This proves the first equality in Eqn.\ \eqref{eq:covl1l2}. The proof of the second equality follows from $\Cov(\L_1(t),\L_2(t))=\E \L_1(t)\L_2(t) - \E \L_1(t)\E\L_2(t)$, in which
\begin{align*}
&\E \L_1(t) \L_2(t) =\E\left[\left(\sum_{i=1}^{N(t)} B_{i1} e^{-r_1(t-U_i)}\right)\left(\sum_{j=1}^{N(t)} B_{j2} e^{-r_2(t-U_j)}\right)\right]\\
&=\sum_{n=0}^\infty e^{-{\nu t}}\f{(\nu t)^n}{n!} \sum_{i=1}^n \sum_{j=1}^n \E(B_{i1} B_{j2} e^{-r_1(t-U_i)}e^{-r_2(t-U_j)})\\
&=\sum_{n=0}^\infty e^{-{\nu t}}\f{(\nu t)^n}{n!} \Big(\f n t \E(B_{11}B_{12}) \int_0^t e^{-(r_1+r_2)(t-u)}\dif u\\
&+ \E B_{11} \E B_{12} \f{n(n-1)}{t^2} \int_0^t e^{-r_1(t-u)}\dif u \int_0^t e^{-r_2(t-v)}\dif v\Big)\\
&=\f{\nu^2\E B_{11}\E B_{12}(1-e^{-r_1 t})(1-e^{-r_2t})}{r_1r_2} + \f{\nu\E B_{11}B_{12}}{r_1+r_2}(1-e^{-(r_1+r_2)t}),
\end{align*}
and $\E\L_i(t)$, for $i=1,2$, is given in Eqn.\ \eqref{eq:momentsSN}.
\end{appendices}

{\small
\bibliographystyle{plain}
\bibliography{biblio}}

\end{document}